\newcommand{\R}{\mathbb{R}}
\newcommand{\C}{\mathbb{C}}
\newcommand{\lr}[1]{\langle #1 \rangle}
\newcommand{\eps}{\varepsilon}
\newcommand{\norm}[1]{\| #1 \|}
\newcommand{\Norm}[1]{\left\| #1\right\|}
\newtheorem{thm}{Theorem}[section]
\newtheorem{prop}[thm]{Proposition}
\newtheorem{dfn}[thm]{Definition}
\newtheorem{lem}[thm]{Lemma}
\theoremstyle{remark}
\newtheorem{rmk}[thm]{Remark}
\DeclareMathOperator{\supp}{supp}
\title[Random data Cauchy theory for 4NLS with cubic nonlinearity]{Random data Cauchy theory for the fourth order nonlinear Schr\"{o}dinger equation with cubic nonlinearity}
\author[H. Hirayama]{Hiroyuki Hirayama}
\address{Graduate School of Mathematics, Nagoya University, Chikusa-ku, Nagoya, 464-8602, Japan}
\email{m08035f@math.nagoya-u.ac.jp}
\author[M. Okamoto]{Mamoru Okamoto}
\address{Department of Mathematics, Institute of Engineering, Academic Assembly, Shinshu University, 4-17-1 Wakasato, Nagano City 380-8553, Japan}
\email{m\_okamoto@shinshu-u.ac.jp}
\date{}
\subjclass[2010]{35Q55}
\numberwithin{equation}{section}
\begin{document}

\begin{abstract}
We consider the Cauchy problem for the fourth order nonlinear Schr\"{o}dinger equation with derivative nonlinearity $(i\partial _t + \Delta ^2) u= \pm \partial (|u|^2u)$ on $\R ^d$, $d \ge 3$, with random initial data, where $\partial$ is a first order derivative with respect to the spatial variable, for example a linear combination of  $\frac{\partial}{\partial x_1} , \, \dots , \, \frac{\partial}{\partial x_d}$ or $|\nabla |= \mathcal{F}^{-1}[|\xi | \mathcal{F}]$.
We prove that almost sure local in time well-posedness, small data global in time well-posedness and scattering hold in $H^s(\R ^d)$ with $\max ( \frac{d-5}{2}, \frac{d-5}{6}) < s$, whose lower bound is below the scale critical regularity $s_c= \frac{d-3}{2}$.
\end{abstract}

\maketitle

\section{Introduction}

We consider the Cauchy problem for the fourth order nonlinear Schr\"{o}dinger equation with derivative nonlinearity:
\begin{equation}
\label{4NLS}
\left\{
\begin{aligned}
& (i \partial _t + \Delta ^2 ) u = \pm \partial ( |u|^2 u), \\
& u(0, \cdot ) = \phi .
\end{aligned}
\right.
\end{equation}
Here, $u : \R \times \R ^d \rightarrow \C$ is an unknown function, $\phi : \R ^d \rightarrow \C$ is a given function, $\partial$ is a first order derivative with respect to the spatial variable, for example a linear combination of  $\frac{\partial}{\partial x_1} , \, \dots , \, \frac{\partial}{\partial x_d}$ or $|\nabla |= \mathcal{F}^{-1}[|\xi | \mathcal{F}]$.
The fourth order Schr\"{o}dinger equation appears in the study of deep water wave dynamics \cite{Dysthe}, solitary waves \cite{Karpman}, \cite{KS}, vortex filaments \cite{Fukumoto}, and so on.
The fourth order Schr\"{o}dinger equation are widely studied, for the results for derivative nonlinearity see \cite{HJ1}, \cite{HHW}, \cite{HJ2}, \cite{Wang}, \cite{HN1}, \cite{HN2}, \cite{HO} and references cited therein.

This equation has scale invariance.
Namely, if $u$ is a solution, then $u_{\mu} (t,x) := \mu ^{-\frac{3}{2}} u(\frac{t}{\mu ^4} , \frac{x}{\mu})$ is also solution.
A simple calculation shows
\begin{equation*}
\norm{u_{\mu}(0,\cdot )}_{\dot{H}^s} = \mu ^{-s + \frac{d-3}{2}} \norm{u(0,\cdot )}_{\dot{H}^s}.
\end{equation*}
Hence, the scale critical regularity is $s_c :=\frac{d-3}{2}$.
We may expect that the scale critical exponent is a threshold between regularity to be well-posed and ill-posed.
According to expectation, the fourth order nonlinear Schr\"{o}dinger equation \eqref{4NLS} is well-posed in $H^{\frac{d-3}{2}}(\R ^d)$ if $d \ge 2$, see for example \cite{HO}.
Of cause, it is not always true and there is also a possibility of presence of a gap between the scaling prediction and the optimal well-posedness regularities (see for example \cite{CCT}).

Burq and Tzvetkov \cite{BT1}, \cite{BT2} however proved that the Cauchy problem for the cubic wave equation on the three dimensional compact Riemannian manifold $M$ is almost surely well-posed in $H^s(M)$ with $s>\frac{1}{4}$ which is deterministically ill-posed in $H^s(M)$ if $s<\frac{1}{2}$.
Recently, B\'{e}nyi, Oh, and Pocovnicu \cite{BOP1, BOP2} and L\"{u}hrmann and Mendelson \cite{LM} showed that the almost sure well-posedness for the Cauchy problem on unbounded domains $\R ^d$ in the super critical Sobolev spaces.
Because of these results, we expect that the scale critical exponent is not the threshold in the random data Cauchy problem.
We therefore consider the almost sure well-posedness of the Cauchy problem with the initial data in the super critical Sobolev spaces.

Following the papers \cite{BOP1}, \cite{BOP2}, we define the randomization.
Let $\psi \in \mathcal{S}(\R ^d)$ satisfy
\[
\supp \psi \subset [-1,1]^d , \quad \sum _{n \in \mathbb{Z}^d} \psi (\xi -n) =1 \quad \text{for any $\xi \in \R ^d$}.
\]
Let $\{ g_n \}$ be a sequence of independent mean zero complex valued random variables on a probability space $(\Omega , \mathcal{F} ,P)$, where the real an imaginary parts of $g_n$ are independent and endowed with probability distributions $\mu _n^{(1)}$ and $\mu _n^{(2)}$.
Throughout this paper, we assume that there exists $c>0$ such that
\[
\left| \int _{\R} e^{\kappa x} d \mu _{n}^{(j)} (x) \right| \le e^{c \kappa ^2}
\]
for all $\kappa \in \R$, $n \in \mathbb{Z}^d$, $j=1,2$.
This condition is satisfied by the standard complex valued Gaussian random variables and the standard Bernoulli random variables.
We then define the Wiener randomization of $\phi$ by
\begin{equation} \label{eq:rand}
\phi ^{\omega} := \sum _{n \in \mathbb{Z}^d} g_n (\omega ) \psi (D-n) \phi .
\end{equation}

The randomization has no smoothing in terms of differentiability (\cite[Appendix B]{BT1}).
However, it improves the integrability (see for example Lemma 2.3 in \cite{BOP1}).
From this point of view, the randomization makes the problem subcritical in some sense.

\begin{thm} \label{thm:LWP}
Let $d \ge 3$ and $\max ( \frac{d-5}{2}, \frac{d-5}{6}) <s< \frac{d-3}{2}$.
Given $\phi \in H^s(\R ^d)$, let $\phi ^{\omega}$ be its randomization defined by \eqref{eq:rand}.
Then, for almost all $\omega \in \Omega$, there exist $T_{\omega} >0$ and a unique solution $u$ to \eqref{4NLS} with $u(0,x) = \phi ^{\omega}(x)$ in a space continuously embedded in
\[
S(t) \phi ^{\omega} + C((-T_{\omega},T_{\omega}) ; H^{\frac{d-3}{2}}(\R ^d)) \subset C((-T_{\omega},T_{\omega}) ; H^{s}(\R ^d)).
\]
More precisely, there exist $C, c>0$, $\gamma >0$ such that for each $0<T<1$, there exists $\Omega _{T} \subset \Omega$ with $P(\Omega _T) \ge 1- C \exp \left( - \frac{c}{T^{\gamma} \norm{\phi}_{H^s}^2} \right)$.
\end{thm}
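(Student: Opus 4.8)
The plan is to run a standard Bourgain-style "two-piece" decomposition: write the solution as $u = S(t)\phi^\omega + v$, where $S(t) = e^{it\Delta^2}$ is the free fourth-order Schrödinger group, so that the "rough" randomized linear evolution $F := S(t)\phi^\omega$ carries only $H^s$ regularity but enjoys improved Strichartz integrability, while the nonlinear remainder $v$ will be placed in the subcritical-looking but actually critical-regularity space at level $s_c = \frac{d-3}{2}$. Substituting into \eqref{4NLS}, the remainder solves the Duhamel equation
\[
v(t) = \mp i \int_0^t S(t-t') \,\partial\bigl( |F+v|^2(F+v)\bigr)(t')\,dt',
\]
which I would solve by a contraction mapping argument in a suitable $X^{s_c}$-type space (adapted $U^2$/$V^2$ or $X^{s,b}$ space for the operator $i\partial_t + \Delta^2$, restricted to the time interval $(-T,T)$), intersected with the Strichartz spaces in which the relevant estimates below are phrased. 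Expanding the cubic term $|F+v|^2(F+v)$ produces eight terms, ranging from the purely random $|F|^2F$ (no factor of $v$) to the purely deterministic $|v|^2v$ (three factors of $v$).

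The key steps, in order, are: (1) Record the probabilistic Strichartz estimates for $F = S(t)\phi^\omega$: using the hypothesis that the $\mu_n^{(j)}$ have sub-Gaussian moment generating functions, a Khintchine/Fernique-type argument (as in \cite{BOP1}, \cite{BT1}) upgrades the unit-frequency-localized Strichartz bounds for $S(t)$ to space-time estimates for $\phi^\omega$ at regularity $s$ in a wider range of exponents, with the crucial gain that one may afford one extra derivative coming from $\partial$ at the cost of dropping below the deterministic threshold — this is exactly where the constraint $s > \max(\frac{d-5}{2},\frac{d-5}{6})$ enters, and where the large-deviation set $\Omega_T$ with $P(\Omega_T) \ge 1 - C\exp(-c/(T^\gamma \|\phi\|_{H^s}^2))$ is produced by choosing the threshold in the tail bound proportional to a small negative power of $T$ times $\|\phi\|_{H^s}$. (2) Prove the deterministic trilinear/nonlinear estimate for $\partial(|w_1|^2 w_2)$-type terms mapping $X^{s_c}\times X^{s_c}\times X^{s_c}\to X^{s_c}$ (with a $T^\theta$ smalling factor), which is the deterministic well-posedness input already available at $s_c$ (cf. \cite{HO}). (3) Prove the mixed estimates for the remaining six terms, each containing at least one factor of $F$: here one uses the improved integrability of $F$ from step (1) to absorb the derivative $\partial$, placing the $F$-factors in the good Strichartz norms from step (1) and the $v$-factors in $X^{s_c}$ (or its embedded Strichartz norms), always with a positive power of $T$ to gain smallness. (4) Combine (2)–(3) into the contraction: on $\Omega_T$, for $T$ small depending on $\|\phi\|_{H^s}$, the map $v \mapsto$ (Duhamel integral above) is a contraction on a ball in $X^{s_c}((-T,T))$; uniqueness and continuous dependence follow as usual, and the embedding $v \in C((-T,T);H^{s_c}) \subset C((-T,T);H^s)$ together with $F \in C((-T,T);H^s)$ gives the stated description of the solution space.

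The main obstacle is step (3), the mixed nonlinear estimates: the nonlinearity is derivative (order one), the randomized data lives strictly below scaling, and the remainder lives exactly at scaling, so for the worst terms — a single $v$ against two copies of $F$, or two $v$'s against one $F$, with the derivative $\partial$ landing on the highest-frequency factor — one must very carefully balance frequencies, deciding which factor receives $\partial$ after a Littlewood–Paley decomposition and whether to estimate in $X^{s,b}$ with $b$ slightly above or below $1/2$ or directly in Strichartz norms; the $U^2_{\Delta^2}$/$V^2_{\Delta^2}$ duality (bilinear $L^2$ refinements of the fourth-order Strichartz estimates, exploiting the extra curvature/transversality of the symbol $|\xi|^4$ relative to $|\xi|^2$) is what makes the numerology close, and getting the two lower bounds $\frac{d-5}{2}$ and $\frac{d-5}{6}$ to be exactly the right thresholds — the first governing terms linear in $v$, the second the terms with two or three $v$'s (or vice versa) — is the delicate bookkeeping that this step demands. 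The probabilistic step (1) and the pure contraction step (4) are, by contrast, routine given the framework of \cite{BOP1}.
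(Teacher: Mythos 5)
Your overall architecture matches the paper's exactly: the decomposition $u = z + v$ with $z = S(t)\phi^\omega$, the perturbed Duhamel equation for $v$, contraction in a space at the critical regularity $s_c = \frac{d-3}{2}$, and the $P(\Omega_T)$ tail bound coming from choosing the probabilistic threshold as a small negative power of $T$ are all as in Section 4 of the paper. Your identification of step (3) -- the mixed $vvz$ and $vzz$ estimates -- as the bottleneck, and your rough assignment of the two thresholds $\tfrac{d-5}{2}$ and $\tfrac{d-5}{6}$ to the different interaction types, is also in the right spirit (in the paper the $\tfrac{d-5}{6}$ constraint arises in the pure $zzz$ case, Subcases 2-1 and 2-2, while $\tfrac{d-5}{2}$ appears in the mixed cases).

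Where you diverge from the paper is precisely at the step you flag as decisive. You assert that the numerology only closes via an $X^{s,b}$ or $U^2_{\Delta^2}/V^2_{\Delta^2}$ framework and bilinear $L^2$ refinements of the fourth-order Strichartz estimates. The paper uses none of this. Its resolution space $\mathcal{X}$ (Definition 3.1) is a pure intersection of weighted Lebesgue--Strichartz norms, and the nonlinear estimate in Lemma 3.2 is closed by Littlewood--Paley decomposition, H\"older, Sobolev embedding, and the \emph{linear} Strichartz estimates of Proposition 2.3 alone. The derivative loss from $\partial$ is recovered not through bilinear transversality but through the smoothing in the Schr\"odinger-admissible Strichartz estimate for the biharmonic flow, $\norm{S(t)\phi}_{L_t^q L_x^r} \lesssim \norm{|\nabla|^{-2/q}\phi}_{L^2}$ (a $\tfrac{2}{q}$-derivative gain coming from the disparity between the order of the dispersion and the Schr\"odinger admissibility relation), combined with the extra integrability that the randomization gives to $z$ (Lemma 2.4). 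The factor $T^{\delta/4}$ needed for the local contraction is then extracted by slightly shortening the time exponent on one of the $z$-factors to make its pair non-admissible (Remark before Lemma 3.4), not by an $X^{s,b}$ gain in $b$. So while your plan is not internally inconsistent, it attributes the closing of the estimates to machinery that is absent from the paper, and the crucial linear smoothing mechanism actually used does not appear in your outline; you should either verify that the bilinear route genuinely reaches the stated thresholds or replace it with the smoothing-Strichartz argument.
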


Since well-posedness in $H^s(\R ^d)$ with $s \ge s_c = \frac{d-3}{2}$ holds in the deterministic setting, we concentrate in the case $s < \frac{d-3}{2}$.
Theorem \ref{thm:LWP} says that almost sure local in time well-posedness for \eqref{4NLS}.
Namely, \eqref{4NLS} possesses local strong solutions for a large class of functions in $H^s(\R ^d)$ with $s<\frac{d-3}{2}$.

Decomposing $u$ into the linear and nonlinear parts, we estimate the contributions for each part to the nonlinearity.
Thanks to the Strichartz estimates, the nonlinear part has more regularity than the initial data.
More precisely, the nonlinear part belongs to $C((-T_{\omega}, T_{\omega}) ; H^{\frac{d-3}{2}}(\R ^d))$ even if $\phi \in H^s(\R )$ with $s<\frac{d-3}{2}$.
On the other hand, by the randomization, the improved Strichartz estimate holds for the linear part with randomized initial data (see Lemma \ref{lem:stcStr} below), but it remains $C((-T_{\omega}, T_{\omega}) ;H^s(\R ^d))$.

Next, we focus on the global behavior of the solution.

\begin{thm} \label{thm:WP}
Let $d \ge 3$ and $\max ( \frac{d-5}{2}, \frac{d-5}{6}) <s< \frac{d-3}{2}$.
Given $\phi \in H^s(\R ^d)$, let $\phi^{\omega}$ be its randomization defined by \eqref{eq:rand}.
Then, there exist $C, c>0$ and $\Omega _{\phi} \subset \Omega$ such that with the following properties:
\begin{enumerate}[label=(\alph*)]
\item \label{gwp1} $P(\Omega _{\phi}) \ge 1- C \exp \left( - \frac{c}{\norm{\phi}_{H^s}^2} \right)$.
\item \label{gwp2} For each $\omega \in \Omega$, there exists a (unique) global in time solution $u$ to \eqref{4NLS} with $u(0,x) = \phi ^{\omega} (x)$ in the class
\[
S(t) \phi ^{\omega} + C(\R ; H^{\frac{d-3}{2}}(\R ^d)) \subset C(\R ; H^{s}(\R ^d)).
\]
\item \label{gwp3} For each $\omega \in \Omega _{\phi}$, there exists $v_{\pm}^{\omega} \in H^{\frac{d-3}{2}}(\R ^d)$ such that
\[
\norm{u(t) - S(t) (\phi ^{\omega} + v_{\pm}^{\omega})}_{H^{\frac{d-3}{2}}} \rightarrow 0
\]
as $t \rightarrow \pm \infty$.
\end{enumerate}
\end{thm}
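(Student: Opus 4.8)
The plan is to split off the free evolution and solve for the nonlinear part by a contraction argument that is now global in time. Write $S(t):=e^{it\Delta^{2}}$ and $s_{c}:=\frac{d-3}{2}$, and look for a solution of the form $u=S(t)\phi^{\omega}+v$; then $v$ must solve
\[
v(t)=\mp i\int_{0}^{t}S(t-t')\,\partial\bigl(|S(t')\phi^{\omega}+v|^{2}(S(t')\phi^{\omega}+v)\bigr)\,dt',\qquad v(0)=0.
\]
I would look for $v$ in a ball of a space $X=X(\R)$ continuously embedded in $C(\R;H^{s_{c}}(\R^{d}))$ and built from the global-in-time Strichartz norms of the fourth-order propagator at regularity $s_{c}$ together with the resolution spaces of the deterministic $H^{s_{c}}$-theory of \eqref{4NLS} (as in \cite{HO}), now without the time-localizing factor. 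Since the Wiener randomization gains no differentiability, $S(t)\phi^{\omega}\in C(\R;H^{s})$ but not $C(\R;H^{s_{c}})$ in general, so $u\in S(t)\phi^{\omega}+C(\R;H^{s_{c}})\subset C(\R;H^{s})$, which is the class claimed in \ref{gwp2}.

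Next I would fix an absolute constant $\delta_{0}>0$, to be chosen small in terms of the multilinear estimates below, and set
\[
\Omega_{\phi}:=\bigl\{\omega\in\Omega:\norm{S(t)\phi^{\omega}}_{Y(\R)}\le\delta_{0}\bigr\},
\]
where $Y(\R)$ is the auxiliary global space in which the improved Strichartz estimate for the randomization holds (Lemma \ref{lem:stcStr}). The sub-Gaussian hypothesis on $\{g_{n}\}$ together with Lemma \ref{lem:stcStr} and the usual Khintchine-type large-deviation estimate (cf.\ \cite{BOP1}) gives $P(\norm{S(t)\phi^{\omega}}_{Y(\R)}>\lambda)\le C\exp(-c\lambda^{2}/\norm{\phi}_{H^{s}}^{2})$, hence $P(\Omega_{\phi})\ge1-C\exp(-c\delta_{0}^{2}/\norm{\phi}_{H^{s}}^{2})$, which is \ref{gwp1} after relabelling $c$.

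On $\Omega_{\phi}$ I would close the contraction. Expanding $|S(t)\phi^{\omega}+v|^{2}(S(t)\phi^{\omega}+v)$ into the eight trilinear terms whose factors lie among $F:=S(t)\phi^{\omega}$, $v$ and their conjugates, one estimates the Duhamel term of $\partial(\cdot)$ in $X$ term by term: the term with no factor $F$, namely $\partial(|v|^{2}v)$, is controlled by the deterministic critical trilinear estimate at regularity $s_{c}$ (the mechanism behind $H^{s_{c}}$ well-posedness, transplanted to the global spaces), contributing $\lesssim\norm{v}_{X}^{3}$; each term with at least one $v$ and at least one $F$ is handled by distributing the single derivative via the fractional Leibniz rule and pairing the $v$-factors in their $H^{s_{c}}$-based norm against the $F$-factors in $Y(\R)$, where only $s$ derivatives but extra integrability are available, contributing $\lesssim(\norm{v}_{X}+\norm{v}_{X}^{2})\delta_{0}+\norm{v}_{X}\delta_{0}^{2}$; and the source term $\partial(|F|^{2}F)$ is bounded by $\lesssim\norm{F}_{Y(\R)}^{3}\le\delta_{0}^{3}$ by Lemma \ref{lem:stcStr} alone. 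Summing, for $\delta_{0}$ small the map sends the ball $\{\norm{v}_{X}\le2C\delta_{0}^{3}\}$ into itself and is a contraction; its fixed point is the unique solution there, and a standard argument upgrades uniqueness to the full class $S(t)\phi^{\omega}+X$, giving \ref{gwp2}. For \ref{gwp3}, since $v\in X$ exists for all $t\in\R$, the same estimates applied on $[T,T']$ show $\int_{T}^{T'}S(-t')\partial(|F+v|^{2}(F+v))\,dt'\to0$ in $H^{s_{c}}$ as $T,T'\to\pm\infty$; hence $S(-t)v(t)\to v_{\pm}^{\omega}$ in $H^{s_{c}}=H^{\frac{d-3}{2}}$ for some $v_{\pm}^{\omega}$, and from $S(-t)u(t)-\phi^{\omega}=S(-t)v(t)$ we get $\norm{u(t)-S(t)(\phi^{\omega}+v_{\pm}^{\omega})}_{H^{\frac{d-3}{2}}}=\norm{S(-t)v(t)-v_{\pm}^{\omega}}_{H^{\frac{d-3}{2}}}\to0$.

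The hard part is the package of trilinear estimates in the \emph{global} spaces. The deterministic critical estimate for the derivative cubic nonlinearity $\partial(|u|^{2}u)$ at regularity $s_{c}$ must be run without any time-localizing gain so that it produces genuinely global solutions and scattering, and the mixed and source estimates close only because Lemma \ref{lem:stcStr} supplies roughly one derivative's worth of extra integrability on $F=S(t)\phi^{\omega}$ — exactly what is needed to absorb the derivative lost in $\partial(|u|^{2}u)$. Arranging the fourth-order-admissible Strichartz exponents and the space $Y(\R)$ so that all eight terms close simultaneously is precisely what forces $\max(\frac{d-5}{2},\frac{d-5}{6})<s$, and this bookkeeping, rather than any single inequality, is the main obstacle.
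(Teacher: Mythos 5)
Your proposal follows essentially the same route as the paper: decompose $u=S(t)\phi^{\omega}+v$, solve for $v$ by a global-in-time contraction in an $H^{\frac{d-3}{2}}$-based Strichartz space, define $\Omega_{\phi}$ by smallness of auxiliary global Strichartz norms of the free evolution, obtain the tail bound from the large-deviation consequence of Lemma \ref{lem:stcStr}, and derive scattering from the convergence of the Duhamel integral in $H^{\frac{d-3}{2}}$. The only small omission is that the paper's $\Omega_{\phi}$ also controls $\norm{\phi^{\omega}}_{H^s}$ (needed to treat the low-frequency pieces of $z$ as if they were $v$ in the trilinear estimates), and of course the decisive content — the trilinear bounds of Lemma \ref{lem:nonest} that justify your term-by-term absorption — is left as a black box, which you acknowledge.
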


The uniqueness holds in the space $X^s$ defined by \eqref{functsp} below, which is a subspace continuously embedded in $S(t) \phi ^{\omega} + C(\R ; H^{\frac{d-3}{2}}(\R ^d))$

Theorem \ref{thm:WP} says that the probabilistic small data global well-posedness and scattering because \ref{gwp1} is meaningful if
\[
\norm{\phi}_{H^s} \le \left( \frac{c}{\log C} \right) ^{\frac{1}{2}}.
\]
Moreover, it follows from Theorem \ref{thm:WP} that for almost all $\omega \in \Omega$, there exists  $\eps (\omega ) >0$ such that for every $\eps \in (0, \eps (\omega ))$, there exists a global in time solution $u$ to \eqref{4NLS} with $u(0,x) = \eps \phi ^{\omega}(x)$ in a space continuously embedded in $C(\R ; H^{s}(\R ^d))$ (see Remark \ref{almostsureGWP}).

\begin{rmk}
The one and two dimensional cases are not included in Theorems \ref{thm:LWP} and \ref{thm:WP}.
We believe that it is hard to prove the same results in $d=1,2$ because of the following difficulties.
The worst interaction, which is so called ``resonance,'' occurs, because we focus on the nonlinearity $\partial (|u|^2u)$.
Accordingly, to recover derivative, we only rely on the Strichartz estimates.
In other words, we can not expect some good structure of the nonlinearity.

Since the randomization makes the integrability better, but the smoothness the same, from the randomized Strichartz estimate, it only recovers the  one half derivative in $d=1$ and one derivative in $d \ge 2$, which is the same as the deterministic case.
To consider the nonlinear estimate, we can therefore recover at most one derivative in $d=1$ and two derivatives in $d \ge 2$ even in the randomized case.
It however does not enough to get almost sure well-posedness in $H^s(\R )$ with $s<0$ and $H^s(\R ^2)$ with $s<-\frac{1}{2}$ because of the presence of one derivative in front of the nonlinear part.
Furthermore, the lower bound $\frac{d-5}{2} = s_c-1$ for $d \ge 5$ is almost optimal from this point of view.
\end{rmk}

From a scaling argument as in \cite{BOP2}, we may consider the global well-posedness for large initial data in a large probability.
Given $\mu >0$, define $\psi ^{\mu}$ by
\[
\psi ^{\mu} (\xi ) = \psi \left( \frac{\xi}{\mu} \right) .
\]
Then, the following decomposition holds: for any function $\phi$ on $\R ^d$,
\[
\phi =\sum _{n \in \mathbb{Z}^d} \psi ^{\mu}( D-  \mu n) \phi .
\]
We define the randomization of $\phi ^{\omega, \mu}$ of $\phi$ on dilated cubes of scale $\mu$ by
\begin{equation} \label{randdil}
\phi ^{\omega ,\mu} := \sum _{n \in \mathbb{Z}^d} g_n(\omega ) \psi ^{\mu} ( D- \mu n) \phi .
\end{equation}
Then, we have the following global in time well-posedness of \eqref{4NLS} for large initial data with a large probability.

\begin{thm} \label{thm:GWP}
Let $d \ge 4$ and $\phi \in H^s(\R ^d)$ for $\max ( \frac{d-5}{2}, \frac{d-5}{6}) <s< \frac{d-3}{2}$.
Then, for each $0< \eps \ll 1$, there exists a small dilation scale $\mu _0 = \mu _0 (\eps , \norm{\phi}_{H^s})>0$ such that for every $\mu \in (0,\mu _0)$, there exists a set $\Omega _{\mu} \subset \Omega$ with the following properties:
\begin{enumerate}[label=(\alph*)]
\item \label{gwp-l1} $P(\Omega _{\mu} ) \ge 1- \eps$.
\item \label{gwp-l2} If $\phi ^{\omega ,\mu}$ is the randomization define in \eqref{randdil}, then for each $\omega \in \Omega$, there exists a (unique) global in time solution to \eqref{4NLS} with $u(0,x) = \phi^{\omega , \mu}(x)$ in the class
\[
S(t) \phi^{\omega , \mu}(x) + C(\R ; H^{\frac{d-3}{2}}(\R ^d)) \subset C(\R ; H^{s}(\R ^d)).
\]
\item\label{gwp-l3} For each $\omega \in \Omega _{\mu}$, there exists $v_{\pm}^{\omega} \in H^{\frac{d-3}{2}}(\R ^d)$ such that
\[
\norm{u(t) - S(t) (\phi^{\omega , \mu} + v_{\pm}^{\omega})}_{H^{\frac{d-3}{2}}} \rightarrow 0
\]
as $t \rightarrow \pm \infty$.
\end{enumerate}
\end{thm}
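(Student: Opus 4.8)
The plan is to deduce Theorem \ref{thm:GWP} from Theorem \ref{thm:WP} by the scaling symmetry of \eqref{4NLS}, exactly as in \cite{BOP2}: apply the unit-scale theory to a rescaled datum. For $\mu>0$ set $\phi_{\mu}(x):=\mu^{-3/2}\phi(x/\mu)$, which is the initial value of the rescaled solution $u_{\mu}$. The first step is to record the elementary Fourier-side identity
\[
\bigl(\phi^{\omega,\mu}\bigr)_{\mu}=(\phi_{\mu})^{\omega},
\]
i.e.\ the dilated randomization \eqref{randdil} of $\phi$ at scale $\mu$, rescaled by the symmetry, is precisely the unit-scale Wiener randomization \eqref{eq:rand} of $\phi_{\mu}$; this is immediate from $\psi^{\mu}(\xi)=\psi(\xi/\mu)$ together with the scaling of the Fourier transform. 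Combining it with the intertwining relation $(S(t)f)_{\mu}=S(\mu^{4}t)f_{\mu}$ and the fact that $f\mapsto f_{\mu}$ is, for each fixed $\mu$, a bounded bijection of $H^{\frac{d-3}{2}}(\R^{d})$ and an isometry of $\dot H^{\frac{d-3}{2}}(\R^{d})$, one gets: $u$ is a global solution of \eqref{4NLS} with data $\phi^{\omega,\mu}$ in the class $S(t)\phi^{\omega,\mu}+C(\R;H^{\frac{d-3}{2}})$ if and only if $u_{\mu}$ is such a solution with data $(\phi_{\mu})^{\omega}$, and $u$ scatters in $H^{\frac{d-3}{2}}$ to $S(t)(\phi^{\omega,\mu}+v_{\pm}^{\omega})$ if and only if $u_{\mu}$ scatters to $S(t)\bigl((\phi_{\mu})^{\omega}+(v_{\pm}^{\omega})_{\mu}\bigr)$; uniqueness transports the same way.

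Granting this, I would apply Theorem \ref{thm:WP} with $\phi$ replaced by $\phi_{\mu}$. It produces $\Omega_{\phi_{\mu}}\subset\Omega$ with $P(\Omega_{\phi_{\mu}})\ge 1-C\exp\bigl(-c/\norm{\phi_{\mu}}_{H^{s}}^{2}\bigr)$ on which the solution with data $(\phi_{\mu})^{\omega}$ is global, lies in $S(t)(\phi_{\mu})^{\omega}+C(\R;H^{\frac{d-3}{2}})$, and scatters in $H^{\frac{d-3}{2}}$. Undoing the dilation via the equivalences above and setting $\Omega_{\mu}:=\Omega_{\phi_{\mu}}$ gives \ref{gwp-l2} and \ref{gwp-l3}, with $v_{\pm}^{\omega}$ the rescaling of the scattering states from Theorem \ref{thm:WP}; as in Theorem \ref{thm:WP}, part \ref{gwp-l2} is inherited for every $\omega$.

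It then remains, for \ref{gwp-l1}, to choose $\mu_{0}=\mu_{0}(\eps,\norm{\phi}_{H^{s}})$ so small that $C\exp\bigl(-c/\norm{\phi_{\mu}}_{H^{s}}^{2}\bigr)\le\eps$ for all $\mu\in(0,\mu_{0})$, which reduces to the one genuinely analytic point, namely $\norm{\phi_{\mu}}_{H^{s}}\to 0$ as $\mu\to 0^{+}$. I would verify this by the change of variables $\eta=\mu\xi$, which yields
\[
\norm{\phi_{\mu}}_{H^{s}}^{2}=\mu^{d-3}\int_{\R^{d}}\bigl(1+|\eta|^{2}/\mu^{2}\bigr)^{s}\,|\widehat{\phi}(\eta)|^{2}\,d\eta ,
\]
and then splitting the integral at $|\eta|=1$. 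On $\{|\eta|\ge1\}$ one has $\bigl(1+|\eta|^{2}/\mu^{2}\bigr)^{s}\lesssim \mu^{-2s}\lr{\eta}^{2s}$ for $\mu\le1$, so this contribution is $\lesssim \mu^{d-3-2s}\norm{\phi}_{H^{s}}^{2}=\mu^{2(s_{c}-s)}\norm{\phi}_{H^{s}}^{2}$, which tends to $0$ by the subcriticality $s<s_{c}=\frac{d-3}{2}$. On $\{|\eta|<1\}$ one bounds the weight crudely (by $1$ when $s\le0$, by $\lesssim\mu^{-2s}$ when $s>0$) and, using that the low frequencies of $\phi$ lie in $L^{2}$ since $\lr{\eta}^{2s}\approx1$ there, obtains a contribution $\lesssim(\mu^{d-3}+\mu^{2(s_{c}-s)})\norm{\phi}_{H^{s}}^{2}$, which tends to $0$ because $d\ge4$. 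The step I expect to be the only real obstacle — though it is short — is precisely this low-frequency estimate: it is where $d\ge4$ (rather than $d\ge3$) is forced, since for $d=3$ the prefactor $\mu^{d-3}$ does not decay and $\norm{\phi_{\mu}}_{H^{s}}$ need not tend to $0$, and it is also where one must take care that $\phi\in H^{s}$ with $s$ possibly negative (which occurs when $d=4$) still suffices.
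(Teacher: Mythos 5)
Your proposal reproduces the paper's argument faithfully: the key identity $(\phi^{\omega,\mu})_{\mu}=(\phi_{\mu})^{\omega}$ is exactly the paper's \eqref{scale3}, and the smallness of $\norm{\phi_{\mu}}_{H^{s}}$ as $\mu\to 0^{+}$ (which you derive by an explicit high/low frequency split) is the paper's single inequality \eqref{scale} with exponent $\tfrac{d-3}{2}-\max(s,0)>0$, from which the choice of $\mu_{0}$ and the transfer of the Theorem~\ref{thm:WP} machinery through the scaling follow. The argument is correct and takes essentially the same approach as the paper.
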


Since we have to extract a measurable set with positive measure because of \ref{gwp-l1}, Theorem \ref{thm:GWP} is not an almost sure global well-posednss result.

We observe that thanks to $s >0$
\begin{equation} \label{scale}
\norm{u_{\mu}(0, \cdot )}_{H^s}=\mu^{\frac{d-3}{2}}\norm{\langle \mu^{-1}\xi \rangle^{s}\widehat{u}(0,\xi )}_{L^{2}_{\xi}}
\leq \mu^{\frac{d-3}{2}-\max (s,0)}\norm{u (0, \cdot )}_{H^{s}}
\end{equation}
for  $0<\mu <1$.
Roughly speaking, by the scaling which makes the initial data small enough for $\max\{s,0\}<\frac{d-3}{2}$, Theorem \ref{thm:GWP} is reduced to the small data setting.
To modify randomization as in \eqref{randdil}, we can treat the random data Cauchy problem.
In order to employ the scaling argument, we assume $d \ge 4$ because $d=3$ is the mass critical.
Indeed, by the scaling argument \eqref{scale}, we can not make the norm $\norm{\phi}_{H^s}$ small if $d=3$.

We now give a brief outline of this article.
In Section \ref{S:2}, we collect lemmas which are used in the proof of our main results.
In Section \ref{S:3}, we define the function spaces and show these properties and prove the main nonlinear estimates.
In Section \ref{S:4}, we give a proof of almost sure well-posedness results, Theorems \ref{thm:LWP}, \ref{thm:WP}.
In Section \ref{S:5}, we prove Theorem \ref{thm:GWP}.

\section{The probabilistic lemmas} \label{S:2}

Firstly, we recall the probabilistic estimate.

\begin{lem}[\cite{BT1}] \label{lem:ranint}
There exists $C>0$ such that
\[
\Norm{\sum _{n \in \mathbb{Z}^d} g_n(\omega ) c_n}_{L^p(\Omega )} \le C \sqrt{p} \norm{c_n}_{l^2}
\]
for all $p \ge 2$ and $\{ c_n \} \in l^2$.
\end{lem}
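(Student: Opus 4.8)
The plan is to derive the inequality from the sub-Gaussian hypothesis on the laws $\mu_n^{(j)}$ by the standard moment generating function argument, as in \cite{BT1}. First I would reduce to real scalars. Writing $g_n = \Re g_n + i\,\Im g_n$ and $c_n = a_n + ib_n$, the real and imaginary parts of $\sum_n g_n c_n$ are each of the form $\sum_j h_j d_j$, where $\{h_j\}$ is an enumeration of the independent, mean zero, real random variables among $\{\Re g_n\}\cup\{\Im g_n\}$ and $\{d_j\}\in l^2$ satisfies $\norm{d_j}_{l^2}\le \norm{c_n}_{l^2}$ (indeed $\sum_n(a_n^2+b_n^2)=\norm{c_n}_{l^2}^2$). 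Hence by the triangle inequality in $L^p(\Omega)$ it suffices to prove the bound for $S=\sum_j h_j d_j$ with $h_j$ real, mean zero, each distributed according to some $\mu_n^{(k)}$, and — by homogeneity and truncation — with $\norm{d_j}_{l^2}=1$ and only finitely many $d_j$ nonzero. Note that since $\mu_n^{(k)}$ is a probability measure on $\R$, the quantity $\int_\R e^{\kappa x}\,d\mu_n^{(k)}(x)$ is real and positive, so the hypothesis is literally $\mathbb{E}[e^{\kappa h_j}]\le e^{c\kappa^2}$ for all $\kappa\in\R$.

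Next, by independence and the hypothesis applied with $\kappa' = \kappa d_j$,
\[
\mathbb{E}\big[e^{\kappa S}\big]=\prod_j \mathbb{E}\big[e^{\kappa d_j h_j}\big]\le \prod_j e^{c\kappa^2 d_j^2}=e^{c\kappa^2}\qquad(\kappa\in\R).
\]
Applying this to $\pm S$, Markov's inequality gives $P(|S|>\lambda)\le 2e^{-\kappa\lambda+c\kappa^2}$ for all $\kappa>0$, and optimizing in $\kappa$ (taking $\kappa=\lambda/(2c)$) yields the sub-Gaussian tail bound $P(|S|>\lambda)\le 2e^{-\lambda^2/(4c)}$ for every $\lambda>0$.

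Then I would integrate the tail bound via the layer-cake formula:
\[
\mathbb{E}\big[|S|^p\big]=p\int_0^\infty \lambda^{p-1}P(|S|>\lambda)\,d\lambda\le 2p\int_0^\infty \lambda^{p-1}e^{-\lambda^2/(4c)}\,d\lambda=p\,(4c)^{p/2}\,\Gamma(p/2),
\]
using the substitution $\lambda=(4c)^{1/2}t^{1/2}$. Taking $p$-th roots, $\norm{S}_{L^p(\Omega)}\le (4c)^{1/2}\big(p\,\Gamma(p/2)\big)^{1/p}$, and Stirling's formula gives $\Gamma(p/2)\le C^{\,p}(p/2)^{p/2}$ while $p^{1/p}\le e^{1/e}$ for $p\ge2$; hence $\norm{S}_{L^p(\Omega)}\le C\sqrt p$. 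Undoing the normalization and the real/imaginary splitting yields $\Norm{\sum_n g_n(\omega)c_n}_{L^p(\Omega)}\le C\sqrt p\,\norm{c_n}_{l^2}$ with a harmless change of constant.

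There is no genuine obstacle in this lemma — it is the classical Khintchine-type estimate and everything above is routine. The only points requiring a little care are the reduction to real scalars (checking that the $l^2$-normalization of coefficients is preserved) and the elementary Gamma-function asymptotics; the exponential moment hypothesis is used exactly once, in the product estimate for $\mathbb{E}[e^{\kappa S}]$.
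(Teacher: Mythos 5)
Your argument is correct and is exactly the standard sub-Gaussian Khintchine argument that the cited reference \cite{BT1} (Lemma 3.1) uses; the paper at hand gives no proof of its own but simply invokes that reference. The reduction to real scalars, the exponential-moment/Markov tail bound, and the layer-cake integration with Stirling are all in order; the one detail worth keeping explicit (which you correctly note) is that the coefficient sequences for $\Re$ and $\Im$ of $\sum_n g_n c_n$ each have $l^2$-norm exactly $\norm{c_n}_{l^2}$, since they are reshufflings of $\{a_n\}\cup\{\pm b_n\}$.
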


The randomization keeps differentiability of the function.

\begin{lem}[\cite{BOP1}] \label{lem:randiff}
Given $\phi \in H^s (\R ^d)$, let $\phi ^{\omega}$ be its randomization defined by \eqref{eq:rand}.
Then, there exist $C,c >0$ such that
\[
P( \norm{\phi ^{\omega}}_{H^s} > \lambda ) < C \exp \left( - c \frac{\lambda ^2}{\norm{\phi}_{H^s}^2} \right)
\]
for all $\lambda >0$.
\end{lem}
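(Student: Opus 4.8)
\textbf{Proof proposal for Lemma \ref{lem:randiff}.}
The plan is the standard large-deviation argument: bound all $L^p(\Omega)$ moments of $\norm{\phi^\omega}_{H^s}$ linearly in $\sqrt p$, then optimize in $p$ via Chebyshev. First I would fix $p \ge 2$ and apply Minkowski's inequality to interchange the $L^p(\Omega)$ norm with the spatial $L^2$ norm, writing
\[
\Norm{\, \norm{\phi^\omega}_{H^s} \,}_{L^p(\Omega)}
\le \Norm{\, \lr{\xi}^s \Norm{\widehat{\phi^\omega}(\xi)}_{L^p(\Omega)} \,}_{L^2_\xi}.
\]
For each fixed $\xi$ one has $\widehat{\phi^\omega}(\xi) = \sum_{n} g_n(\omega)\, \psi(\xi-n)\, \widehat{\phi}(\xi)$, a sum in which at most $3^d$ terms are nonzero because $\supp \psi \subset [-1,1]^d$. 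Applying Lemma \ref{lem:ranint} in the $\omega$ variable gives
\[
\Norm{\widehat{\phi^\omega}(\xi)}_{L^p(\Omega)}
\le C\sqrt p \, \Bigl( \sum_{n\in\mathbb Z^d} |\psi(\xi-n)|^2 \Bigr)^{1/2} |\widehat{\phi}(\xi)|
\le C\sqrt p \,|\widehat{\phi}(\xi)|,
\]
where the last inequality uses that $\sum_n |\psi(\xi-n)| \lesssim 1$ uniformly in $\xi$ (finitely many terms, each bounded by $\norm{\psi}_{L^\infty}$). Combining the two displays yields $\Norm{\norm{\phi^\omega}_{H^s}}_{L^p(\Omega)} \le C\sqrt p \,\norm{\phi}_{H^s}$ for every $p \ge 2$.

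With the moment bound in hand, Chebyshev's inequality gives, for any $p \ge 2$,
\[
P\bigl( \norm{\phi^\omega}_{H^s} > \lambda \bigr)
\le \lambda^{-p} \Bigl( C\sqrt p \,\norm{\phi}_{H^s} \Bigr)^p
= \Bigl( \frac{C\sqrt p\,\norm{\phi}_{H^s}}{\lambda} \Bigr)^p.
\]
Choosing $p := \lambda^2 /\bigl( C^2 e\, \norm{\phi}_{H^s}^2 \bigr)$ makes the right-hand side equal to $e^{-p} = \exp\bigl( -c\lambda^2/\norm{\phi}_{H^s}^2\bigr)$ with $c = (C^2 e)^{-1}$. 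This choice is legitimate when $p \ge 2$, i.e.\ when $\lambda^2/\norm{\phi}_{H^s}^2 \ge 2C^2 e$; in the complementary range $\lambda^2/\norm{\phi}_{H^s}^2 < 2C^2 e$ the claimed bound holds trivially since $P(\cdots) \le 1 \le C' \exp(-c\lambda^2/\norm{\phi}_{H^s}^2)$ once $C'$ is taken large enough (e.g.\ $C' = e^{2C^2 e c}$). Absorbing constants gives the stated inequality for all $\lambda > 0$.

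There is no serious obstacle here; the argument is entirely routine and, as indicated, the result is already available in \cite{BOP1}. The only points requiring a little care are the finite-overlap bound $\sum_n |\psi(\xi-n)| \lesssim 1$ (immediate from $\psi \in \mathcal S(\R^d)$ with compactly supported frequency bumps) and the bookkeeping in the $p$-optimization to cover the small-$\lambda$ regime, as handled above. One should also note that Lemma \ref{lem:ranint}, which is the engine of the estimate, follows from the exponential moment hypothesis on the laws $\mu_n^{(j)}$ imposed in the introduction.
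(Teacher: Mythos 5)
Your proof is correct and is precisely the standard argument from \cite{BOP1} (going back to Burq--Tzvetkov), which is what the paper cites in lieu of giving its own proof: Minkowski's inequality (valid for $p\ge 2$) to interchange $L^2_\xi$ with $L^p(\Omega)$, the Khintchine-type bound of Lemma~\ref{lem:ranint} applied pointwise in $\xi$ using the finite overlap of the translates $\psi(\cdot-n)$, and then Chebyshev with $p$ taken proportional to $\lambda^2/\norm{\phi}_{H^s}^2$, with the small-$\lambda$ regime absorbed into $C$. The only slip is arithmetic: with your choice $p=\lambda^2/(C^2 e\,\norm{\phi}_{H^s}^2)$ one has $C\sqrt{p}\,\norm{\phi}_{H^s}/\lambda = e^{-1/2}$, so the right-hand side is $e^{-p/2}$ rather than $e^{-p}$; this merely halves the constant $c$ and does not affect the result.
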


Let $S(t) := e^{it \Delta ^2}$ be the linear propagator of the fourth order Schr\"{o}dinger group,
Namely, $v(t) = S(t) \phi$ solves
\[
(i\partial _t + \Delta ^2) v= 0, \quad v(0) = \phi.
\]
We say that a pair $(q,r)$ is Schr\"{o}dinger admissible if $2 \le q,r \le \infty$, $(q,r,d) \neq (2, \infty ,2)$, and
\[
\frac{2}{q} + \frac{d}{r} = \frac{d}{2}.
\]
We say that a pair $(q,r)$ is biharmonic admissible if $2 \le q ,r \le \infty$, $(q,r,d) \neq (2, \infty ,4)$, and
\[
\frac{4}{q} + \frac{d}{r} = \frac{d}{2}.
\]
The following Strichartz estimates hold.

\begin{prop}[\cite{Pau}] \label{prop:Str}
\begin{enumerate}
\item Let $(q,r)$ be biharmonic admissible.
Then, we have
\[
\norm{S(t) \phi}_{L_t^q L_x^r (\R \times \R ^d)} \lesssim \norm{\phi}_{L^2}.
\]
Let $(\tilde{q}, \tilde{r})$ be also biharmonic admissible and $(\tilde{q}', \tilde{r}')$ be the pair of conjugate exponents of $(\tilde{q}, \tilde{r})$.
Then, we have
\[
\Norm{\int _{\R} S(-t')F(t') dt' }_{L_x^2(\R ^d)} + \Norm{\int _0^t S(t-t')F(t') dt' }_{L_t^q L_x^r (\R \times \R ^d)} \lesssim \norm{F}_{L_t^{\tilde{q}'} L_x^{\tilde{r}'}(\R \times \R ^d)}.
\]

\item Let $(q,r)$ be Schr\"{o}dinger admissible.
Then, we have
\[
\norm{S(t) \phi}_{L_t^q L_x^r (\R \times \R ^d)} \lesssim \norm{|\nabla |^{-\frac{2}{q}} \phi}_{L^2}.
\]
Let $(\tilde{q}, \tilde{r})$ be also Schr\"{o}dinger admissible and $(\tilde{q}', \tilde{r}')$ be the pair of conjugate exponents of $(\tilde{q}, \tilde{r})$.
Then, we have
\begin{align*}
& \Norm{\int _{\R ^d} S(-t')F(t') dt' }_{L_x^2 (\R ^d)} \lesssim \norm{|\nabla |^{-\frac{2}{\tilde{q}}} F}_{L_t^{\tilde{q}'} L_x^{\tilde{r}'}(\R \times \R ^d)} , \\
& \Norm{\int _0^t S(t-t')F(t') dt' }_{L_t^q L_x^r (\R \times \R ^d)} \lesssim \norm{|\nabla |^{-\frac{2}{q}-\frac{2}{\tilde{q}}} F}_{L_t^{\tilde{q}'} L_x^{\tilde{r}'}(\R \times \R ^d)}.
\end{align*}
\end{enumerate}
\end{prop}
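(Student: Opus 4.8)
The plan is to deduce both parts from the abstract Keel--Tao Strichartz machinery, the only genuinely new input being two dispersive ($L^1\to L^\infty$) bounds for $S(t)=e^{it\Delta^2}$. Since $S(t)$ is unitary on $L^2$, it is enough to control the kernel $K_t(x)=(2\pi)^{-d}\int_{\R^d}e^{i(t|\xi|^4+x\cdot\xi)}\,d\xi$. Rescaling $\xi\mapsto|t|^{-1/4}\xi$ gives $K_t(x)=|t|^{-d/4}\widetilde K(\pm|t|^{-1/4}x)$ with $\widetilde K(y)=(2\pi)^{-d}\int_{\R^d}e^{i(|\xi|^4+y\cdot\xi)}\,d\xi$, and I would show $\widetilde K\in L^\infty(\R^d)$ by a stationary phase / van der Corput argument: for $|y|\gtrsim1$ the critical point solves $4|\xi|^2\xi=-y$, so $|\xi|\sim|y|^{1/3}$ and the Hessian of $|\xi|^4$ there is comparable to $|y|^{2/3}$, hence non-degenerate, while the bounded region $|\xi|\lesssim1$ (where $|\xi|^4$ degenerates at the origin) contributes only an $O(1)$ term once the far region is treated by non-stationary phase. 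This yields $\norm{S(t)}_{L^1\to L^\infty}\lesssim|t|^{-d/4}$. For a frequency-localized bound, rescaling $\xi=N\eta$ in the kernel of $S(t)P_N$ produces the phase $tN^4|\eta|^4+Nx\cdot\eta$, whose Hessian is $\sim tN^4$ on $|\eta|\sim1$, so stationary phase gives $\norm{S(t)P_N}_{L^1\to L^\infty}\lesssim\min(N^d,N^{-d}|t|^{-d/2})\le N^{-d}|t|^{-d/2}$.

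For part~(1) I would feed $\norm{S(t)\phi}_{L^2}=\norm{\phi}_{L^2}$ and $\norm{S(t-s)}_{L^1\to L^\infty}\lesssim|t-s|^{-d/4}$ into the Keel--Tao theorem with decay exponent $\sigma=d/4$: the resulting class of admissible pairs is exactly $\tfrac4q+\tfrac dr=\tfrac d2$, i.e.\ the biharmonic admissible pairs, and the single endpoint excluded by Keel--Tao (the one with $\sigma=1$) is precisely $(q,r,d)=(2,\infty,4)$. The dual estimate $\Norm{\int_\R S(-t')F(t')\,dt'}_{L^2_x}\lesssim\norm{F}_{L^{\tilde q'}_tL^{\tilde r'}_x}$ is the $TT^*$-adjoint of the homogeneous estimate for the pair $(\tilde q,\tilde r)$, and the retarded estimate for $\int_0^t S(t-t')F(t')\,dt'$ follows either from the inhomogeneous part of Keel--Tao or from the homogeneous and dual estimates together with the Christ--Kiselev lemma (with the bilinear form of Keel--Tao covering the double endpoint, which Christ--Kiselev misses).

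Part~(2) carries a derivative loss because, when restricted to frequency $N$, $S(t)$ disperses at the Schr\"odinger rate $|t|^{-d/2}$ but with the extra gain $N^{-d}$. Fix a Schr\"odinger admissible pair $(q,r)$; since $d\ge3$ forces $r<\infty$, the Littlewood--Paley square function estimate and Minkowski's inequality give $\norm{S(t)\phi}_{L^q_tL^r_x}^2\lesssim\sum_N\norm{S(t)P_N\phi}_{L^q_tL^r_x}^2$. For each $N$ put $W(\tau):=e^{iN^{-2}\tau\Delta^2}P_N$; the frequency-localized bound above gives $\norm{W(\tau)W(\tau')^*}_{L^1\to L^\infty}\lesssim|\tau-\tau'|^{-d/2}$ with implied constant independent of $N$, so the Keel--Tao theorem with decay exponent $d/2$ yields $\norm{W(\tau)\phi}_{L^q_\tau L^r_x}\lesssim\norm{\phi}_{L^2}$, and undoing the time rescaling $t=N^{-2}\tau$ turns this into $\norm{S(t)P_N\phi}_{L^q_tL^r_x}\lesssim N^{-2/q}\norm{P_N\phi}_{L^2}$. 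Summing over $N$ gives $\norm{S(t)\phi}_{L^q_tL^r_x}\lesssim\norm{|\nabla|^{-2/q}\phi}_{L^2}$. The dual estimate is again $TT^*$-duality, and the retarded estimate is obtained by composing the homogeneous bound (which supplies the weight $|\nabla|^{-2/q}$) with the dual bound (which supplies $|\nabla|^{-2/\tilde q}$), the time truncation $\int_0^t$ being handled frequency-by-frequency as in part~(1).

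I expect the only step requiring genuine work to be the global dispersive bound $\norm{S(t)}_{L^1\to L^\infty}\lesssim|t|^{-d/4}$: because the symbol $|\xi|^4$ has a degenerate critical point at the origin, the stationary phase estimate for $\widetilde K$ must be carried out carefully and uniformly in the spatial parameter $y$, whereas everything afterwards is a bookkeeping application of Keel--Tao, the Littlewood--Paley square function, and the rescaling that trades biharmonic dispersion for the Schr\"odinger one at the cost of $2/q$ derivatives.
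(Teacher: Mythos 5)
The paper gives no proof of Proposition \ref{prop:Str} at all: it is stated as a black box and cited verbatim from Pausader \cite{Pau}. So there is no ``paper's own proof'' to compare against; what you have written is a genuine from-scratch derivation, and the natural question is whether it is sound and whether it is the route the cited reference actually takes.

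It is, on both counts. The dispersive bound $\norm{S(t)}_{L^1\to L^\infty}\lesssim|t|^{-d/4}$, obtained by rescaling to $\widetilde K(y)=(2\pi)^{-d}\int e^{i(|\xi|^4+y\cdot\xi)}\,d\xi$ and showing $\widetilde K\in L^\infty$ via stationary phase at the unique critical point $4|\xi|^2\xi=-y$ (with Hessian $\sim|y|^{2/3}$, hence $|\widetilde K(y)|\lesssim|y|^{-d/3}$ for $|y|\gtrsim1$, and an $O(1)$ direct bound near the degenerate origin), is exactly the Ben-Artzi--Koch--Saut/Pausader estimate, and feeding it into Keel--Tao with $\sigma=d/4$ produces precisely the biharmonic class $\frac4q+\frac dr=\frac d2$ together with the single missing endpoint $(2,\infty,4)$, as in part~(1). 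For part~(2), the frequency-localized bound $\norm{S(t)P_N}_{L^1\to L^\infty}\lesssim N^{-d}|t|^{-d/2}$ followed by the time rescaling $W(\tau)=S(N^{-2}\tau)P_N$, Keel--Tao with $\sigma=d/2$, and undoing the rescaling to pick up the factor $N^{-2/q}$, then Littlewood--Paley summation (legitimate because all Schr\"odinger pairs with $d\ge3$ have $r<\infty$), is again exactly Pausader's argument. The only place you elide some work is the uniformity in $y$ of the stationary-phase bound for $\widetilde K$ across all scales of $|y|$ (small, intermediate, large), and the observation that the double-endpoint retarded estimate in part~(2) must be run frequency-by-frequency, where each shell satisfies a genuine (loss-free) Strichartz estimate covered by Keel--Tao, before summing; you flag both points correctly, so these are details to be filled in rather than gaps.
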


By the randomization, improved Strichartz type estimates hold.

\begin{lem} \label{lem:stcStr}
Given $\phi \in L^2(\R ^d)$, let $\phi ^{\omega}$ be its randomization defined by \eqref{eq:rand}.

\begin{enumerate}[label=(\roman*)]
\item Let $(q,r)$ be biharmonic admissible with $q,r<\infty$ and $r \le \bar{r} <\infty$.
Then, there exist $C, c>0$ such that
\[
P(\norm{S(t) \phi ^{\omega}}_{L_t^q L_x^{\bar{r}} (\R \times \R ^d)} > \lambda ) \le C \exp \left( -c \frac{\lambda ^2}{\norm{\phi}_{L^2}^2} \right)
\]
for all $\lambda >0$.

\item Let $(q,r)$ be Schr\"{o}dinger admissible with $q,r<\infty$ and $r \le \bar{r} < \infty$.
Then, there exist $C, c>0$ such that
\[
P(\norm{|\nabla |^{\frac{2}{q}} S(t) \phi ^{\omega}}_{L_t^q L_x^{\bar{r}} (\R \times \R ^d)} > \lambda ) \le C \exp \left( -c \frac{\lambda ^2}{\norm{\phi}_{L^2}^2} \right)
\]
for all $\lambda >0$.
\end{enumerate}
\end{lem}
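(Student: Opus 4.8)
The plan is to follow the standard route for probabilistic Strichartz estimates: first establish a bound on the $L^p(\Omega)$ norm of the space-time norm in question, with the sharp $\sqrt{p}$ growth, and then upgrade it to the Gaussian tail estimate via Chebyshev's inequality and an optimization in $p$. I would prove part (i) in full and deduce part (ii) by the same argument, inserting the multiplier $|\nabla|^{\frac{2}{q}}$ — which commutes with $S(t)$ and with the cutoffs $\psi(D-n)$ — and invoking the Schr\"{o}dinger-admissible Strichartz estimate of Proposition \ref{prop:Str} in place of the biharmonic one.

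First, fix $p\ge\max(q,\bar r)$; in particular $p\ge 2$. Since $q,\bar r\le p$, the generalized Minkowski inequality lets me pull the $L^p(\Omega)$ norm inside the space-time norm,
\[
\Norm{\norm{S(t)\phi^\omega}_{L^q_tL^{\bar r}_x}}_{L^p(\Omega)} \le \Norm{\norm{S(t)\phi^\omega}_{L^p(\Omega)}}_{L^q_tL^{\bar r}_x}.
\]
For fixed $(t,x)$ I would apply Lemma \ref{lem:ranint} to the random series $S(t)\phi^\omega=\sum_n g_n(\omega)S(t)\psi(D-n)\phi$ to obtain the pointwise bound $\norm{S(t)\phi^\omega(x)}_{L^p(\Omega)}\le C\sqrt p\,\bigl(\sum_n|S(t)\psi(D-n)\phi(x)|^2\bigr)^{1/2}$. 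Then, since $\bar r\ge 2$ and $q\ge 2$, the triangle inequality in $L^{\bar r/2}_x$ followed by $L^{q/2}_t$ converts the square function into $\bigl(\sum_n\norm{S(t)\psi(D-n)\phi}_{L^q_tL^{\bar r}_x}^2\bigr)^{1/2}$.

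The main work is the single-block estimate. Each $S(t)\psi(D-n)\phi$ has spatial Fourier support in the unit cube $n+[-1,1]^d$, a translate of $[-1,1]^d$, so Bernstein's inequality gives $\norm{S(t)\psi(D-n)\phi}_{L^{\bar r}_x}\lesssim\norm{S(t)\psi(D-n)\phi}_{L^r_x}$ with a constant depending only on $d,r,\bar r$ and not on $n$; the biharmonic Strichartz estimate of Proposition \ref{prop:Str} then yields $\norm{S(t)\psi(D-n)\phi}_{L^q_tL^r_x}\lesssim\norm{\psi(D-n)\phi}_{L^2}$. By the finite overlap and boundedness of $\psi$ one has $\sum_n\norm{\psi(D-n)\phi}_{L^2}^2\lesssim\norm{\phi}_{L^2}^2$, and combining the previous displays gives
\[
\Norm{\norm{S(t)\phi^\omega}_{L^q_tL^{\bar r}_x}}_{L^p(\Omega)}\le C\sqrt p\,\norm{\phi}_{L^2}\qquad\text{for all }p\ge\max(q,\bar r).
\]
Finally, Chebyshev's inequality gives $P(\norm{S(t)\phi^\omega}_{L^q_tL^{\bar r}_x}>\lambda)\le(C\sqrt p\,\norm{\phi}_{L^2}/\lambda)^p$; choosing $p=(\lambda/(e^{1/2}C\norm{\phi}_{L^2}))^2$ whenever this is $\ge\max(q,\bar r)$ makes the right-hand side equal to $\exp(-c\lambda^2/\norm{\phi}_{L^2}^2)$ with $c=(2eC^2)^{-1}$, while for smaller $\lambda$ the asserted bound holds trivially after enlarging $C$. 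For part (ii) the only change is that one applies Bernstein to the cube-localized function $|\nabla|^{\frac{2}{q}}S(t)\psi(D-n)\phi$ and then uses $\norm{S(t)(|\nabla|^{\frac{2}{q}}\psi(D-n)\phi)}_{L^q_tL^r_x}\lesssim\norm{|\nabla|^{-\frac{2}{q}}|\nabla|^{\frac{2}{q}}\psi(D-n)\phi}_{L^2}=\norm{\psi(D-n)\phi}_{L^2}$.

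I expect the only mildly delicate point to be the uniform-in-$n$ Bernstein step in part (ii): near the origin the symbol $|\xi|^{2/q}$ is bounded but not smooth on $[-1,1]^d$, yet this is harmless because Bernstein is applied to $|\nabla|^{\frac{2}{q}}S(t)\psi(D-n)\phi$, which already has Fourier support in a fixed-size cube, so its constant depends only on $d,r,\bar r$. Everything else parallels the deterministic Strichartz theory and the probabilistic argument of \cite{BOP1}, so I do not anticipate further difficulties.
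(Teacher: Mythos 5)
Your proposal is correct and follows essentially the same route as the paper: reduce to the $L^p(\Omega)$ moment bound with $\sqrt{p}$ growth, interchange norms by Minkowski, apply the Khintchine-type Lemma \ref{lem:ranint} pointwise, use Bernstein on each unit-cube piece to pass from $L^{\bar r}_x$ to $L^r_x$, invoke the (biharmonic or Schr\"{o}dinger) Strichartz estimate of Proposition \ref{prop:Str}, and sum in $\ell^2$. The only difference is that you spell out the Chebyshev--optimization step that upgrades the moment bound to the Gaussian tail, which the paper references via \cite{BT1,BOP1} without reproducing.
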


\begin{proof}
By the same argument as in \cite{BT1} and \cite{BOP1}, it suffices to show that
\[
\left( \int _{\Omega} \norm{S(t) \phi ^{\omega}}_{L_t^q L_x^{\bar{r}}} ^p dP \right) ^{1/p} \lesssim \sqrt{p} \norm{\phi}_{L^2}.
\]
for $p \ge \max ( q, \tilde{r})$.
By the Minkowski and Bernstein inequalities, Lemma \ref{lem:ranint}, and Proposition \ref{prop:Str}, we have
\begin{align*}
\left( \int _{\Omega} \norm{S(t) \phi ^{\omega}}_{L_t^q L_x^{\bar{r}}} ^p dP \right) ^{1/p} 
& \lesssim \Norm{ \norm{S(t) \phi ^{\omega}}_{L^p(\Omega )} }_{L_t^q L_x^{\bar{r}}}
\lesssim \sqrt{p} \Norm{ \norm{\psi (D-n) S(t) \phi}_{l^2}}_{L_t^q L_x^{\bar{r}}} \\
& \lesssim \sqrt{p} \Norm{ \norm{\psi (D-n) S(t) \phi}_{L_x^{\bar{r}}}}_{L_t^q l^2}
\lesssim  \sqrt{p} \Norm{ \norm{\psi (D-n) S(t) \phi}_{L_t^q L_x^{r}}}_{l^2} \\
& \lesssim \sqrt{p} \Norm{ \norm{\psi (D-n) S(t) \phi}_{L_x^2}}_{l^2}
\sim \sqrt{p} \norm{\phi}_{L^2}.
\end{align*}

Since the case (b) is similarly handled, we omit the details here.
\end{proof}

\section{Nonlinear estimates} \label{S:3}

Before showing the probabilistic nonlinear estimates, we observe some corollaries of the Strichartz estimates.

\begin{dfn} \label{functionsp}
We define $\mathcal{X}$ to be the space of the completion of $\mathcal{S}(\R \times \R ^{d})$ with respect to the following norm:
\begin{align*}
\norm{u}_{\mathcal{X}} & := \norm{u}_{L_t^{\infty} L_x^2} + \norm{|\nabla | u}_{L_t^2 L_x^{\frac{2d}{d-2}}} + \norm{|\nabla |^{\frac{1}{2}} u}_{L_t^4 L_x^{\frac{2d}{d-1}}} + \norm{|\nabla |^{2\delta}u}_{L_t^{\frac{1}{\delta}} L_x^{\frac{2d}{d-4\delta}}},
\end{align*}
where $\delta >0$ is a small constant.
We also define the space $\mathcal{Y}$ to be the space dual to $\mathcal{X}$ with appropriate norm. 
In other words, the space $\mathcal{Y}$ is the sum of Banach spaces $L_t^1 L_x^2$, $L_t^2 |\nabla | L_x^{\frac{2d}{d+2}}$, $L_t^{\frac{4}{3}} |\nabla |^{\frac{1}{2}}L_x^{\frac{2d}{d+1}}$, and $L_t^{\frac{1}{1-\delta}} |\nabla |^{2\delta} L_x^{\frac{2d}{d+4\delta}}$ whose norm is defined by the usual manner.
Let $s \in \R$.
We define $X^s$ to be the space of completion of $\mathcal{S}(\R \times \R ^d)$ with respect to the norm
\begin{equation} \label{functsp}
\norm{u}_{X^s} := \norm{\lr{\nabla}^s u}_{\mathcal{X}}
\end{equation}
\end{dfn}

We note that $(2, \frac{2d}{d-2})$, $(4, \frac{2d}{d-1})$ and $(\frac{1}{\delta}, \frac{2d}{d-4\delta})$ are Schr\"{o}dinger admissible pairs if $d \ge 3$.
By Proposition \ref{prop:Str}, we have
\begin{equation} \label{naive}
\norm{S(t) \phi}_{\mathcal{X}} \lesssim \norm{\phi}_{L^2}, \quad
\Norm{\int _0^t S(t-t') F(t') dt'}_{\mathcal{X}} \lesssim \norm{F}_{\mathcal{Y}} .
\end{equation}

Given an interval $I \subset \R$, we define by $X^s(I)$ the time restricted space of $X^s$.
Namely,
\[
\norm{u}_{X^s(I)} := \inf \{ \norm{v}_{X^s} : u=v \text{ on } I \times \R ^d \} .
\]

Put $\mathcal{N} (u) = \partial (|u|^2u)$.
Let $z(t) = z^{\omega} (t) := S(t) \phi ^{\omega}$ and $v(t) = u(t) - z(t)$ be the linear and nonlinear parts of $u$ respectively.
Then, \eqref{4NLS} is equivalent to the following perturbed equation:
\begin{equation} \label{eq:p4NLS}
\left\{
\begin{aligned}
& (i\partial _t + \Delta ^2) v = \pm \mathcal{N}(v+z) , \\
& v(0,x) =0
\end{aligned}
\right.
\end{equation}
Define $\Gamma$ by
\[
\Gamma v(t) := \mp i \int _0^t S(t-t') \mathcal{N}(v+z) (t') dt'.
\]
To state probabilistic nonlinear estimates, we define the following norms:
For an interval $I \subset \R$,
\begin{gather} \label{norm:S_0}
\begin{aligned}
\norm{u}_{S_0(I)} & := \max \{ \norm{u}_{L_t^q L_x^r (I \times \R ^d)} : (q,r) = (\tfrac{1}{\delta}, \tfrac{2d}{d-8\delta}) , (\tfrac{1}{\delta}, d), (\tfrac{1}{\delta}, \tfrac{d}{1-2\delta}), (\tfrac{1}{\delta}, \tfrac{4d}{d-1+8\delta}) \} \\
& \qquad + \max \{ \norm{|\nabla |^{\frac{2}{q}} u}_{L_t^q L_x^r (I \times \R ^d)} :  (q,r) = (4, 2d), (4, \tfrac{4d}{1+16\delta}), (\tfrac{2}{1-2\delta}, \tfrac{d}{4\delta}), (\tfrac{2}{1-2\delta}, 4d),  \\
& \hspace*{100pt} (\tfrac{2}{1-2\delta}, \tfrac{4d}{d-3+16\delta}) , (\tfrac{2}{1-4\delta}, \tfrac{2d}{d-2}),  (\tfrac{2}{1-4\delta}, \tfrac{d}{8\delta}),  (\tfrac{2}{1-4\delta}, \tfrac{2d}{d-4+16\delta}),  (\tfrac{2}{1-4\delta}, \tfrac{4d}{d-1+8\delta}) \} ,
\end{aligned}
\\
\label{norm:S_0'}
\begin{aligned}
\norm{u}_{S_0'(I)} & := \max \{ \norm{u}_{L_t^q L_x^r (I \times \R ^d)} :  (q,r) =(\tfrac{1}{\delta}, \tfrac{2d}{d-8\delta}) , (\tfrac{2}{\delta}, d), (\tfrac{2}{\delta}, \tfrac{d}{1-2\delta}), (\tfrac{2}{\delta}, \tfrac{4d}{d-1+8\delta}) \} \\
& \qquad + \max \{ \norm{|\nabla |^{\frac{2}{q}} u}_{L_t^q L_x^r (I \times \R ^d)} : (q,r) = (\tfrac{4}{1-\delta}, 2d), (\tfrac{4}{1-\delta}, \tfrac{4d}{1+16\delta}), (\tfrac{2}{1-3\delta}, \tfrac{d}{4\delta}), (\tfrac{2}{1-3\delta}, 4d),\\
& \hspace*{100pt}  (\tfrac{2}{1-3\delta}, \tfrac{4d}{d-3+16\delta}) , (\tfrac{2}{1-5\delta}, \tfrac{2d}{d-2}),  (\tfrac{2}{1-5\delta}, \tfrac{d}{8\delta}),  (\tfrac{2}{1-5\delta}, \tfrac{2d}{d-4+16\delta}),  (\tfrac{2}{1-5\delta}, \tfrac{4d}{d-1+8\delta}) \} .
\end{aligned}
\end{gather}
The following is the main result in this section.

\begin{lem} \label{lem:nonest}
Let $d \ge 3$, $\max ( \frac{d-5}{2}, \frac{d-5}{6}) <s< \frac{d-3}{2}$, and $\delta >0$ be sufficiently small depending only on $d$ and $s$.
Given $\phi \in H^s(\R ^d)$, let $\phi ^{\omega}$ be its randomization defined by \eqref{eq:rand}.
For $R>0$, we put
\[
E_R := \{ \omega \in \Omega : \norm{\phi ^{\omega}}_{H^s} + \norm{\lr{\nabla}^s S(t) \phi ^{\omega}}_{S_0(\R )} \le R \} .
\]
Then, we have
\begin{align}
\norm{\Gamma v}_{X^{\frac{d-3}{2}}} & \le C_1 \left( \norm{v}_{X^{\frac{d-3}{2}}}^3 + R ^3 \right) , \label{eq:nonest1} \\
\norm{\Gamma v_1- \Gamma v_2}_{X^{\frac{d-3}{2}}} & \le C_2 \left( \norm{v_1}_{X^{\frac{d-3}{2}}}^2 + \norm{v_2}_{X^{\frac{d-3}{2}}}^2 + R^2 \right) \norm{v_1-v_2}_{X^{\frac{d-3}{2}}} \label{eq:nonest2}
\end{align}
for all $v, v_1, v_2 \in X^{\frac{d-3}{2}}$ and $\omega \in E_R$.
\end{lem}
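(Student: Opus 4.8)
The plan is to set up a contraction-type estimate for $\Gamma$ in $X^{\frac{d-3}{2}}$ by feeding the trilinear structure of $\mathcal{N}$ into the inhomogeneous Strichartz estimates of Proposition \ref{prop:Str}. First I would rewrite $\lr{\nabla}^{\frac{d-3}{2}}\Gamma v$ using \eqref{naive} and, crucially, the derivative-recovering Schr\"odinger-admissible estimates of Proposition \ref{prop:Str}(2): since for Schr\"odinger admissible $(q,r)$, $(\tilde q,\tilde r)$ one has $\Norm{\int_0^t S(t-t')F\,dt'}_{L_t^qL_x^r}\lesssim\norm{|\nabla|^{-\frac{2}{q}-\frac{2}{\tilde q}}F}_{L_t^{\tilde q'}L_x^{\tilde r'}}$, and since $\mathcal{N}(u)=\partial(|u|^2u)$ carries one derivative, it suffices to estimate $\lr{\nabla}^{\sigma}(w_1\bar w_2 w_3)$ in $L_t^{\tilde q'}L_x^{\tilde r'}$ for suitable admissible pairs, where $\sigma=\frac{d-1}{2}-\frac{2}{q}-\frac{2}{\tilde q}$ bookkeeps the $\frac{d-3}{2}$ output derivatives, the one derivative from $\partial$, and the up to two derivatives gained from the inhomogeneous estimate. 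Low frequencies, where the negative-order operator is awkward, I would dispatch separately via the biharmonic estimates of Proposition \ref{prop:Str}(1), using that $\partial$ gains a power of $|\xi|$ there.

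Next I would expand the cubic term: writing $u=v+z$ with $z=S(t)\phi^{\omega}$,
\[
\mathcal{N}(v+z)=\partial\big(|v+z|^2(v+z)\big)=\sum\partial(w_1\bar w_2 w_3),\qquad w_i\in\{v,z\},
\]
a sum of eight terms grouped by the number $k\in\{0,1,2,3\}$ of factors equal to $z$. For $k=0$ the three factors are placed in the Strichartz norms defining $\mathcal{X}$, rescaled by $\lr{\nabla}^{\frac{d-3}{2}}$, which yields the $\norm{v}_{X^{\frac{d-3}{2}}}^3$ contribution to \eqref{eq:nonest1}. For $k\ge1$, each occurrence of $z$ is controlled by one of the norms building $S_0(\R)$: the pure $L_t^qL_x^r$ norms in \eqref{norm:S_0} exploit the improved integrability of the randomized linear evolution (Lemma \ref{lem:stcStr}(i)), while those carrying $|\nabla|^{2/q}$ are the randomized Strichartz norms of Lemma \ref{lem:stcStr}(ii); on $E_R$ all of these are $\le R$, producing the $R^3$ contribution. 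The somewhat intricate list of exponents in \eqref{norm:S_0} is chosen precisely so that the H\"older splittings below close.

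For each of the eight terms I would distribute $\lr{\nabla}^{\sigma}$ (together with $\partial$) by the fractional Leibniz / Coifman--Meyer rule, putting at most $\frac{d-3}{2}$ derivatives on each factor equal to $v$ and the remainder on the $z$-factors, then apply H\"older's inequality in $t$ and $x$ so that each $v$-factor lands in one of the four $\mathcal{X}$-norms and each $z$-factor in one of the $S_0(\R)$-norms; taking $\delta>0$ small, depending only on $d$ and $s$, makes all exponent and admissibility constraints hold strictly. The difference estimate \eqref{eq:nonest2} follows by the same scheme after writing $\mathcal{N}(v_1+z)-\mathcal{N}(v_2+z)$ as a sum of trilinear terms each carrying one factor $v_1-v_2$ and two factors from $\{v_1,v_2,z\}$, pulling out $\norm{v_1-v_2}_{X^{\frac{d-3}{2}}}$, and using the norms of $S_0'(\R)$ in \eqref{norm:S_0'} for the linear factors there; since in those terms one H\"older-in-time slot is occupied by the $\mathcal{X}$-norm of $v_1-v_2$, less time integrability is left for the (at most two) linear factors, which is why $S_0'(\R)$ demands slightly more integrability in time than $S_0(\R)$.

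The main obstacle is exactly the bookkeeping of this third step: showing that for \emph{every} term in the expansion there is a simultaneously consistent choice of Leibniz splitting and of H\"older exponents, and that this is possible precisely under $s>\max(\frac{d-5}{2},\frac{d-5}{6})$. The binding cases are the $k=3$ terms $\partial(z\bar z z)$: requiring the one derivative from $\partial$ to be absorbed against the linear evolution, which only carries $H^s$ regularity, forces $s>\frac{d-5}{2}=s_c-1$, whereas spreading the $\frac{d-1}{2}$ derivatives as evenly as possible over the three copies of $z$ and cashing in their improved integrability forces $3s>\frac{d-5}{2}$, i.e.\ $s>\frac{d-5}{6}$; for $d\ge5$ the first dominates while for $d=3,4$ the second does, which is the source of the maximum in the hypothesis.
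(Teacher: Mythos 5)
Your architecture matches the paper's: reduce $\|\Gamma v\|_{X^{(d-3)/2}}$ to a trilinear estimate for $\lr{\nabla}^{(d-3)/2}\partial(w_1\bar w_2 w_3)$ in the dual space $\mathcal{Y}$, split $u=v+z$, put $v$-factors in $\mathcal{X}$-type norms and $z$-factors in the randomized norms building $S_0$, and close with H\"older. The paper also reasons this way. The substantive difference, and the place where your proposal has a gap, is that in Cases 2--4 the paper does \emph{not} use a fractional Leibniz/Kato--Ponce splitting: it first performs a full Littlewood--Paley decomposition in all four frequencies $N_1,N_2,N_3,N_4$ and then analyses each pairing ($N_1\sim N_2\gtrsim N_3,N_4$; $N_1\sim N_4\gtrsim N_2,N_3$; etc.) with its own H\"older pairing. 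Your plan skips this and applies a plain Leibniz rule directly on $w_1\bar w_2 w_3$.

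This is not merely bookkeeping. The hypotheses allow $s<0$ when $d=3,4$ (e.g.\ $-\tfrac{1}{3}<s<0$ for $d=3$), and then $\phi\in H^s\not\subset L^2$. A Leibniz splitting necessarily leaves some $z$-factor with zero (or too few) derivatives in one H\"older slot; the randomized Strichartz estimate of Lemma \ref{lem:stcStr} only recovers $|\nabla|^{2/q}$, so without frequency localization that slot is \emph{not} controlled by $\|\phi\|_{H^s}$ when $s<0$. What actually rescues the estimate is the dyadic localization: $\|P_{N_j}z\|_{L^qL^{\bar r}}\lesssim N_j^{-s-2/q}R$ is always finite, and the negative power of $s$ appearing on the low-frequency slots $N_2,N_3$ is paid for by the $N_1^{-s-2/q}$ decay on the highest slot when summing over comparable dyads; this is exactly how the $(d-5)/6$ threshold arises (and it also underlies the constant-output-frequency gain from $\partial$ in the high-high-low interactions, which a naive $\partial(fgh)=(\partial f)gh+\cdots$ sees as a loss of $N_1/N_4$). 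Your heuristic account of the $zzz$ case and of the two thresholds $(d-5)/2$, $(d-5)/6$ is morally right, but without the dyadic decomposition the argument does not close for $d=3,4$. (For $d\ge5$, where $s>0$, your scheme could plausibly work, but the decisive exponent choices are not exhibited.)

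Two further inaccuracies. First, both estimates \eqref{eq:nonest1} and \eqref{eq:nonest2} in this lemma use $S_0(\R)$; the norm $S_0'$ of \eqref{norm:S_0'} appears only in the localized Lemma \ref{lem:nonestl}, where the slight excess of time integrability (relative to admissible) is what produces the factor $T^{\delta/4}$. It has nothing to do with $v_1-v_2$ ``consuming'' a H\"older slot: the difference estimate has exactly the same trilinear structure as the direct one, with one $v$-factor replaced by $v_1-v_2$. Second, Case 1 ($vvv$) is handled by the paper with the Leibniz rule exactly as you say, so that part of your proposal is fine; the Littlewood--Paley machinery is introduced precisely when a factor $z$ appears.
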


\begin{proof}
We only prove \eqref{eq:nonest1} because \eqref{eq:nonest2} follows from in a similar manner.
Proposition \ref{prop:Str} yields that
\[
\norm{\Gamma v}_{X^{\frac{d-3}{2}}}
\lesssim \norm{\lr{\nabla}^{\frac{d-3}{2}} \partial (w_1 w_2 w_3)}_{\mathcal{Y}}
\]
where $w_j= v$ or $z$ ($j=1,2,3$).

We firstly note that the following facts:
\begin{enumerate}[label=(f\arabic*)]
\item \label{d1} $(4, \frac{2d}{d-1})$ is  Schr\"{o}dinger admissible and its dual pair is $(\frac{4}{3}, \frac{2d}{d+1})$.
\item \label{f1} $(\frac{1}{\delta}, \frac{2d}{d-8\delta})$ is biharmonic admissible and its dual pair is $(\frac{1}{1-\delta}, \frac{2d}{d+8\delta})$.
\item \label{f2} $(\frac{2}{1-2\delta}, \frac{2d}{d-2+4 \delta})$ and $(\frac{2}{1-4\delta}, \frac{2d}{d-2+8 \delta})$ are Schr\"{o}dinger admissible.
\item \label{f3} $(2, \frac{2d}{d-2})$ is Schr\"{o}dinger admissible and its dual pair is $(2, \frac{2d}{d+2})$.
\end{enumerate}
We use the dyadic decomposition except for the Case 1 below:
\[
\norm{\lr{\nabla}^{\frac{d-3}{2}} \partial (w_1 w_2 w_3)}_{\mathcal{Y}}
\lesssim \sum _{N_1,N_2, N_3, N_4 \in 2^{\mathbb{N}_0}} N_4^{\frac{d-3}{2}} \norm{P_{N_4} \partial (P_{N_1} w_1 P_{N_2} w_2 P_{N_3} w_3)}_{\mathcal{Y}} .
\]
We observe elementary summation properties: For $2 \le q,r <\infty$
\begin{equation} \label{LP}
\sum _{N \in 2^{\mathbb{N}_0}} \norm{P_N f}_{L_t^q L_x^r}
\lesssim \norm{\lr{\nabla}^{\delta} f}_{L_t^q L_x^r} , \quad
\norm{P_N f}_{L_t^q L_x^r} \lesssim \norm{f}_{L_t^q L_x^r}.
\end{equation}
Indeed, putting $a := \max (q,r)$, from the H\"{o}lder and Minkowski inequalities, and the Littlewood-Paley theory, we obtain
\begin{align*}
\sum _{N \in 2^{\mathbb{N}_0}} \norm{P_N f}_{L_t^q L_x^r}
& \le \left( \sum _{N \in 2^{\mathbb{N}_0}} N^{-\delta} \right) ^{\frac{a}{1-a}} \Norm{\norm{\lr{\nabla}^{\delta} P_N f}_{L_t^q L_x^r}}_{l^a_N}
\le \left( \sum _{N \in 2^{\mathbb{N}_0}} N^{-\delta} \right) ^{\frac{a}{1-a}} \Norm{\norm{\lr{\nabla}^{\delta} P_N f}_{l^a_N}}_{L_t^q L_x^r} \\
& \lesssim \norm{\lr{\nabla}^{\delta} f}_{L_t^q L_x^r} .
\end{align*}
The second inequality follows from the $L^q$ boundedness of the Littlewood-Paley projection.

\noindent
\textbf{Case 1:} $vvv$ case.

This is the deterministic case and the estimate is the same as in \cite{HO}.
But, we repeat it for completeness.
In this case, we need not to use the dyadic decomposition.

Thanks to \ref{d1}, the fractional Leibniz rule (see \cite{CW}), H\"{o}lder's inequality, the Sobolev embedding $W^{\frac{d-2}{2}, \frac{2d}{d-1}} (\R ^d) \hookrightarrow L^{2d}(\R ^d)$, and Proposition \ref{prop:Str} yield
\begin{align*}
\norm{\lr{\nabla}^s \partial (vvv)}_{\mathcal{Y}}
& \lesssim \norm{\lr{\nabla}^s |\nabla |^{\frac{1}{2}} (vvv)}_{L_t^{\frac{4}{3}} L_x^{\frac{2d}{d+1}}}
\lesssim \norm{\lr{\nabla}^s |\nabla |^{\frac{1}{2}} v}_{L_t^4 L_x^{\frac{2d}{d-1}}} \norm{v}_{L_t^4 L_x^{2d}}^2 \\
& \lesssim \norm{\lr{\nabla}^s v}_{\mathcal{X}} \norm{|\nabla |^{\frac{d-2}{2}} u}_{L_t^4 L_x^{\frac{2d}{d-1}}}
\lesssim \norm{v}_{X^s} ^3 .
\end{align*}

\noindent
\textbf{Case 2:} $zzz$ case.

Without loss of generality, we may assume $N_1 \ge N_2 \ge N_3$.

\textbf{Subcase 2-1:} $N_1 \sim N_2 \gtrsim N_3, N_4$.

By H\"{o}lder's inequality, the Sobolev embedding $\dot{W}^{2\delta ,\frac{2d}{d+8\delta}} (\R ^d) \hookrightarrow L^{\frac{2d}{d+4\delta}}(\R ^d)$, and \eqref{LP}, we get
\begin{align*}
\norm{P_{N_4}\partial (P_{N_1} z P_{N_2} z P_{N_3} z)}_{\mathcal{Y}}
& \lesssim N_4 \norm{|\nabla |^{-2\delta} P_{N_4} (P_{N_1} z P_{N_2} z P_{N_3} z)}_{L_t^{\frac{1}{1-\delta}}L_x^{\frac{2d}{d+4\delta}}} \\
& \lesssim N_4 \norm{P_{N_4} (P_{N_1} z P_{N_2} z P_{N_3} z)}_{L_t^{\frac{1}{1-\delta}}L_x^{\frac{2d}{d+8\delta}}} \\
& \lesssim N_4 \norm{P_{N_1} z}_{L_t^{\frac{2}{1-2\delta}} L_x^{\frac{d}{4 \delta}}} \norm{P_{N_2}z}_{L_t^{\frac{2}{1-2\delta}} L_x^{\frac{d}{4 \delta}}} \norm{P_{N_3}z}_{L_t^{\frac{1}{\delta}} L_x^{\frac{2d}{d-8\delta}}} \\
& \lesssim N_1^{-s-1+2\delta} N_2^{-s-1+2\delta} N_3^{-s} N_4 R^3
\end{align*}
for $\omega \in E_R$.
We note that $\frac{d}{4 \delta} > \frac{2d}{d-2+4 \delta}$ holds if $d \ge 3$ and $\delta >0$ is sufficiently small.
We have
\begin{align*}
\sum _{\substack{N_1,N_2,N_3,N_4 \in 2^{\mathbb{N}_0} \\ N_1 \sim N_2 \gtrsim N_3, N_4}} N_4^{\frac{d-3}{2}} \norm{P_{N_4} \partial (P_{N_1} z P_{N_2} z P_{N_3} z)}_{\mathcal{Y}}
& \lesssim \sum _{\substack{N_1,N_2,N_3,N_4 \in 2^{\mathbb{N}_0} \\ N_1 \sim N_2 \gtrsim N_3, N_4}} N_1^{-2s-2+4\delta} N_3^{-s} N_4^{\frac{d-1}{2}} R^3 \\
& \lesssim \sum _{N_1 \in 2^{\mathbb{N}_0}} N_1^{-2s+\max (-s,0) +\frac{d-5}{2}+4\delta} R^3
\lesssim R^3 .
\end{align*}
Here, we used the assumption $s > \max (\frac{d-5}{4}, \frac{d-5}{6})$ and $\delta >0$ is sufficiently small in the last inequality.

\textbf{Subcase 2-2:} $N_1 \sim N_4 \gtrsim N_2, N_3$.

By $d \ge 3$, H\"{o}lder's inequality, and \eqref{LP}, we get
\begin{align*}
\norm{P_{N_4}\partial (P_{N_1} z P_{N_2} z P_{N_3} z)}_{\mathcal{Y}}
& \lesssim \norm{P_{N_4} (P_{N_1} z P_{N_2} z P_{N_3} z)}_{L_t^{2}L_x^{\frac{2d}{d+2}}} \\
& \lesssim \norm{P_{N_1}z}_{L_t^{\frac{2}{1-4\delta}} L_x^{\frac{2d}{d-2}}} \norm{P_{N_2}z}_{L_t^{\frac{1}{\delta}} L_x^{d}} \norm{P_{N_3} z}_{L_t^{\frac{1}{\delta}} L_x^{d}} \\
& \lesssim  N_1^{-s-1+4\delta} N_2^{-s} N_3^{-s} R^3
\end{align*}
for $\omega \in E_R$.
We therefore have
\begin{align*}
\sum _{\substack{N_1,N_2,N_3,N_4 \in 2^{\mathbb{N}_0} \\ N_1 \sim N_4 \gtrsim N_2, N_3}} N_4^{\frac{d-3}{2}} \norm{P_{N_4} \partial (P_{N_1} z P_{N_2} z P_{N_3} z)}_{\mathcal{Y}}
& \lesssim \sum _{\substack{N_1,N_2,N_3,N_4 \in 2^{\mathbb{N}_0} \\ N_1 \sim N_4 \gtrsim N_2, N_3}} N_1^{-s+\frac{d-5}{2}+4\delta} N_3^{-s} N_2^{-s} R^3 \\
& \lesssim \sum _{N_1 \in 2^{\mathbb{N}_0}} N_1 ^{-s+ \max (-2s,0) +\frac{d-5}{2} + 4\delta} R^3 \\
& \lesssim R ^3 ,
\end{align*}
if $s > \max (\frac{d-5}{2}, \frac{d-5}{6})$ and sufficiently small $\delta >0$.

\noindent
\textbf{Case 3:} $vvz$ case.

Without loss of generality, we assume $N_1 \ge N_2$.
We further split the proof into the four subcases.

\textbf{Subcase 3-1:} $N_1 \sim N_2 \gtrsim N_3, N_4$.

If $N_3 \sim 1$, tanks to $P_{N_3} \phi \in H^{\frac{d-3}{2}}(\R ^d)$, it is reduced to Case 1.
We therefore assume $N_3 \gg 1$.
By \ref{d1}, \eqref{LP}, the Sobolev embedding $W^{\frac{d-2}{2}, \frac{2d}{d-1}}(\R ^d) \hookrightarrow L^{2d}(\R ^d)$, we have
\begin{align*}
\norm{P_{N_4}\partial (P_{N_1} v P_{N_2} v P_{N_3} z)}_{\mathcal{Y}}
& \lesssim N_4^{\frac{1}{2}} \norm{P_{N_4} (P_{N_1} v P_{N_2} v P_{N_3} z)}_{L_t^{\frac{4}{3}} L_x^{\frac{2d}{d+1}}} \\
& \lesssim N_4^{\frac{1}{2}} \norm{P_{N_1}v}_{L_t^{4}L_x^{\frac{2d}{d-1}}} \norm{P_{N_2} v}_{L_t^{4} L_x^{2d}} \norm{P_{N_3} z}_{L_t^4 L_x^{2d}} \\
& \lesssim N_4^{\frac{1}{2}} N_2^{\frac{d-2}{2}} \norm{P_{N_1}v}_{L_t^{4}L_x^{\frac{2d}{d-1}}} \norm{P_{N_2} v}_{L_t^{4} L_x^{\frac{2d}{d-1}}} \norm{P_{N_3} z}_{L_t^4 L_x^{2d}} \\
& \lesssim N_1^{-\frac{1}{2}} N_2^{\frac{d-3}{2}} N_3^{-s-\frac{1}{2}} N_4^{\frac{1}{2}} \norm{P_{N_1}v}_{\mathcal{X}} \norm{P_{N_2} v}_{\mathcal{X}} R
\end{align*}
for $\omega \in E_R$.
Hence, from, $s>-1/2$, we obtain
\begin{align*}
& \sum _{\substack{N_1,N_2,N_3,N_4 \in 2^{\mathbb{N}_0} \\ N_1 \sim N_2 \gtrsim N_3, N_4}} N_4^{\frac{d-3}{2}} \norm{P_{N_4} \partial (P_{N_1} v P_{N_2} v P_{N_3} z)}_{\mathcal{Y}} \\
& \qquad \lesssim \sum _{\substack{N_1,N_2,N_3,N_4 \in 2^{\mathbb{N}_0} \\ N_1 \sim N_2 \gtrsim N_3, N_4}} N_1^{\frac{d-4}{2}} N_3^{-s-\frac{1}{2}} N_4^{\frac{d-2}{2}} \norm{P_{N_1} v}_{\mathcal{X}} \norm{P_{N_2} v}_{\mathcal{X}} R \\
& \qquad \lesssim \sum _{\substack{N_1,N_2 \in 2^{\mathbb{N}_0} \\ N_1 \sim N_2}} N_1^{d-3} \norm{P_{N_1} v}_{\mathcal{X}} \norm{P_{N_2} v}_{\mathcal{X}} R \\
& \qquad \lesssim \norm{v}_{X^{\frac{d-3}{2}}}^2 R .
\end{align*}

\textbf{Subcase 3-2:} $N_1 \sim N_3 \gtrsim N_2 , N_4$.

Thanks to $d \ge 3$, \ref{f1}, \ref{f3}, H\"{o}lder's inequality, \eqref{LP}, and the Sobolev embedding $\dot{W}^{\frac{d-2}{2} +2\delta, \frac{2d}{d-4 \delta}} (\R ^d) \hookrightarrow L^{\frac{d}{1-4\delta}} (\R ^d)$, we have
\begin{align*}
\norm{P_{N_4}\partial (P_{N_1} v P_{N_2} v P_{N_3} z)}_{\mathcal{Y}}
& \lesssim N_4 \norm{P_{N_4} (P_{N_1} v P_{N_2} v P_{N_3} z)}_{L_t^{\frac{1}{1-\delta}} L_x^{\frac{2d}{d+8\delta}}} \\
& \lesssim N_4 \norm{P_{N_1} v}_{L_t^{2} L_x^{\frac{2d}{d-2}}} \norm{P_{N_2} v}_{L_t^{\frac{1}{\delta}} L_x^{\frac{d}{1-4\delta}}} \norm{P_{N_3} z}_{L_t^{\frac{2}{1-4\delta}} L_x^{\frac{d}{8 \delta}}} \\
& \lesssim N_4 N_2^{\frac{d-2}{2}} \norm{P_{N_1} v}_{L_t^{2} L_x^{\frac{2d}{d-2}}} \norm{|\nabla |^{2\delta} P_{N_2} v}_{L_t^{\frac{1}{\delta}} L_x^{\frac{2d}{d-4\delta}}} \norm{P_{N_3} z}_{L_t^{\frac{2}{1-4\delta}} L_x^{\frac{d}{8 \delta}}} \\
& \lesssim N_1^{-1} N_2^{\frac{d-2}{2}} N_3^{-s-1+4\delta} N_4 \norm{P_{N_1} v}_{\mathcal{X}} \norm{P_{N_2} v}_{\mathcal{X}} R
\end{align*}
for $\omega \in E_R$.
From \eqref{LP},
\begin{align*}
& \sum _{\substack{N_1,N_2,N_3,N_4 \in 2^{\mathbb{N}_0} \\ N_1 \sim N_3 \gtrsim N_2, N_4}} N_4^{\frac{d-3}{2}} \norm{P_{N_4} \partial (P_{N_1} v P_{N_2} v P_{N_3} z)}_{\mathcal{Y}} \\
& \lesssim \sum _{\substack{N_1,N_2,N_3,N_4 \in 2^{\mathbb{N}_0} \\ N_1 \sim N_3 \gtrsim N_2, N_4}} N_1^{-s-2+4\delta} N_2^{\frac{d-2}{2}} N_4^{\frac{d-1}{2}} \norm{P_{N_1} v}_{\mathcal{X}} \norm{P_{N_2} v}_{\mathcal{X}} R \\
& \lesssim \sum _{N_1 \in 2^{\mathbb{N}_0}} N_1 ^{-s+\frac{d-4}{2}+5 \delta} \norm{P_{N_1}v}_{\mathcal{X}} \norm{v}_{X^{\frac{d-3}{2}}} R
\lesssim \norm{v}_{X^{\frac{d-3}{2}}} ^2 R
\end{align*}
if $s > -\frac{1}{2}$ and $\delta$ is sufficiently small.

\textbf{Subcase 3-3:} $N_1 \sim N_4 \gtrsim N_2, N_3$

From \ref{f1}, \ref{f2}, \ref{f3}, \eqref{LP}, the Sobolev embedding $\dot{W}^{\frac{d-2}{2}+2\delta , \frac{2d}{d-4\delta}}(\R ^d) \hookrightarrow L^{\frac{d}{1-4\delta}}(\R ^d)$, and $s>-\frac{1}{2}$, we have
\begin{align*}
\norm{P_{N_4}\partial (P_{N_1} v P_{N_2} v P_{N_3} z)}_{\mathcal{Y}}
& \lesssim \norm{P_{N_4} (P_{N_1} v P_{N_2} v P_{N_3} z)}_{L_t^{2} L_x^{\frac{2d}{d+2}}} \\
& \lesssim \norm{P_{N_1}v}_{L_t^{\frac{2}{1-4\delta}} L_x^{\frac{2d}{d-2+8\delta}}} \norm{P_{N_2} v}_{L_t^{\frac{1}{\delta}} L_x^{\frac{d}{1-4\delta}}} \norm{P_{N_3} z}_{L_t^{\frac{1}{\delta}} L_x^{d}} \\
& \lesssim N_2^{\frac{d-2}{2}} \norm{P_{N_1}v}_{L_t^{\frac{2}{1-4\delta}} L_x^{\frac{2d}{d-2+8\delta}}} \norm{|\nabla |^{2\delta} P_{N_2} v}_{L_t^{\frac{1}{\delta}} L_x^{\frac{2d}{d-4 \delta}}} \norm{P_{N_3} z}_{L_t^{\frac{1}{\delta}} L_x^{d}} \\
& \lesssim N_1^{-1+4\delta} N_2^{\frac{d-2}{2}}N_3^{-s} \norm{P_{N_1}v}_{\mathcal{X}} \norm{P_{N_2} v}_{\mathcal{X}} R
\end{align*}
for $\omega \in E_R$.
Hence, thanks to \eqref{LP}, we get
\begin{align*}
& \sum _{\substack{N_1,N_2,N_3,N_4 \in 2^{\mathbb{N}_0} \\ N_1 \sim N_4 \gtrsim N_2, N_3}} N_4^{\frac{d-3}{2}} \norm{P_{N_4} \partial (P_{N_1} v P_{N_2} v P_{N_3} z)}_{\mathcal{Y}} \\
& \quad \lesssim \sum _{\substack{N_1,N_2,N_3,N_4 \in 2^{\mathbb{N}_0} \\ N_1 \sim N_4 \gtrsim N_2, N_3}} N_1^{\frac{d-5}{2}+4\delta} N_2^{\frac{d-2}{2}} N_3^{-s} \norm{P_{N_1} v}_{\mathcal{X}} \norm{P_{N_2} v}_{\mathcal{X}} R \\
& \quad \lesssim \sum _{N_1 \in 2^{\mathbb{N}_0}} N_1^{\max (-s,0)+\frac{d-4}{2} + 5\delta} \norm{P_{N_1} v}_{\mathcal{X}} \norm{v}_{X^s} R \\
& \quad \lesssim \norm{v}_{X^{\frac{d-3}{2}}}^2 R
\end{align*}
provided that $s>-\frac{1}{2}$ and $\delta$ is sufficiently small.

\textbf{Subcase 3-4:} $N_3 \sim N_4 \gg N_1 \ge N_2$.

By \ref{f1}, \eqref{LP}, $d \ge 3$, and the Sobolev embedding $\dot{W}^{\frac{d-3}{2}+2\delta , \frac{2d}{d-4\delta}}(\R ^d) \hookrightarrow L^{\frac{2d}{3-8\delta}}(\R ^d)$, we have
\begin{align*}
\norm{P_{N_4}\partial (P_{N_1} v P_{N_2} v P_{N_3} z)}_{\mathcal{Y}}
& \lesssim \norm{P_{N_4} (P_{N_1} v P_{N_2} v P_{N_3} z)}_{L_t^2L_x^{\frac{2d}{d+2}}} \\
& \lesssim \norm{P_{N_1} v}_{L_t^{\frac{1}{\delta}} L_x^{\frac{2d}{3-8\delta}}} \norm{P_{N_2} v}_{L_t^{\frac{1}{\delta}} L_x^{\frac{2d}{3-8\delta}}} \norm{P_{N_3} z}_{L_t^{\frac{2}{1-4\delta}} L_x^{\frac{2d}{d-4+16\delta}}} \\
& \lesssim N_1^{\frac{d-3}{2}} N_2^{\frac{d-3}{2}} \norm{P_{N_1} v}_{L_t^{\frac{1}{\delta}} L_x^{\frac{2d}{d-8\delta}}} \norm{|\nabla |^{2\delta} P_{N_2} v}_{L_t^{\frac{1}{\delta}} L_x^{\frac{2d}{d-4 \delta}}} \norm{P_{N_3} z}_{L_t^{\frac{2}{1-4\delta}} L_x^{\frac{2d}{d-4+16\delta}}} \\
& \lesssim N_1^{\frac{d-3}{2}} N_2^{\frac{d-3}{2}} N_3^{-s-1+4\delta} \norm{P_{N_1}v}_{\mathcal{X}} \norm{P_{N_2} v}_{\mathcal{X}} R
\end{align*}
for $\omega \in E_R$.
From $s > \frac{d-5}{2}$ and \eqref{LP}, we obtain
\begin{align*}
& \sum _{\substack{N_1,N_2,N_3,N_4 \in 2^{\mathbb{N}_0} \\ N_3 \sim N_4 \gtrsim N_1 \ge N_2}} N_4^{\frac{d-3}{2}} \norm{P_{N_4} \partial (P_{N_1} v P_{N_2} v P_{N_3} z)}_{\mathcal{Y}} \\
& \lesssim \sum _{\substack{N_1,N_2,N_3,N_4 \in 2^{\mathbb{N}_0} \\ N_3 \sim N_4 \gtrsim N_1 \ge N_2}} N_1^{\frac{d-3}{2}} N_2^{\frac{d-3}{2}} N_3^{-s+\frac{d-5}{2}+4\delta} \norm{P_{N_1}v}_{\mathcal{X}} \norm{P_{N_2} v}_{\mathcal{X}} R \\
& \lesssim \sum _{N_3 \in 2^{\mathbb{N}_0}} N_3 ^{-s+\frac{d-5}{2}+6\delta} \norm{v}_{X^{\frac{d-3}{2}}}^3 R
\lesssim \norm{v}_{X^{\frac{d-3}{2}}}^2 R ,
\end{align*}
provided that $\delta >0$ is sufficiently small.

\noindent
\textbf{Case 4:} $vzz$ case.

Without loss of generality, we assume $N_2 \ge N_3$.
If $N_2 \sim 1$ and $N_2 \gg N_3 \sim 1$, it is reduced to the Case 1 and Case 3 respectively.
Hence, we also assume $N_3 \gg 1$.
We divide the proof into the four subcases.

\textbf{Subcase 4-1:} $N_1 \sim N_4 \gtrsim N_2 \ge N_3 \gg 1$.

By \ref{f2}, \ref{f3}, and \eqref{LP}, we have
\begin{align*}
\norm{P_{N_4}\partial (P_{N_1} v P_{N_2} z P_{N_3} z)}_{\mathcal{Y}}
& \lesssim \norm{P_{N_4} (P_{N_1} v P_{N_2} z P_{N_3} z)}_{L_t^{2}L_x^{\frac{2d}{d+2}}} \\
& \lesssim \norm{P_{N_1} v}_{L_t^{\frac{2}{1-4\delta}} L_x^{\frac{2d}{d-2+8\delta}}} \norm{P_{N_2} z}_{L_t^{\frac{1}{\delta}} L_x^{\frac{d}{1-2\delta}}} \norm{P_{N_3} z}_{L_t^{\frac{1}{\delta}} L_x^{\frac{d}{1-2\delta}}} \\
& \lesssim N_1^{-1+4\delta} N_2^{-s} N_3^{-s} \norm{P_{N_1}v}_{\mathcal{X}} R^2
\end{align*}
for $\omega \in E_R$.
Thanks to $s>-\frac{1}{2}$ and \eqref{LP}, we have
\begin{align*}
\sum _{\substack{N_1,N_2,N_3,N_4 \in 2^{\mathbb{N}_0} \\N_1 \sim N_4 \gtrsim N_2 \ge N_3 \gg 1}} N_4^{\frac{d-3}{2}} \norm{P_{N_4} \partial (P_{N_1} v P_{N_2} z P_{N_3} z)}_{\mathcal{Y}}
& \lesssim \sum _{\substack{N_1,N_2,N_3,N_4 \in 2^{\mathbb{N}_0} \\ N_1 \sim N_4 \gtrsim N_2 \ge N_3 \gg 1}} N_1^{\frac{d-5}{2}+4\delta} N_2^{-s} N_3^{-s} \norm{P_{N_1}v}_{\mathcal{X}} R^2 \\
& \lesssim \sum _{N_1 \in 2^{\mathbb{N}_0}} N_1 ^{2 \max (-s,0) + \frac{d-5}{2} + 4\delta} \norm{P_{N_1}v}_{\mathcal{X}} R ^2 \\
& \lesssim \norm{v}_{X^{\frac{d-3}{2}}} R ^2
\end{align*}
if $\delta >0$ is sufficiently small.

\textbf{Subcase 4-2:} $N_1 \sim N_2 \gtrsim N_3, N_4$.

By \ref{d1}, \ref{f1}, \eqref{LP}, $d \ge 3$, we have
\begin{align*}
\norm{P_{N_4}\partial (P_{N_1} v P_{N_2} z P_{N_3} z)}_{\mathcal{Y}}
& \lesssim N_4 \norm{P_{N_4} (P_{N_1} v P_{N_2} z P_{N_3} z)}_{L_t^{\frac{1}{1-\delta}} L_x^{\frac{2d}{d+8\delta}}} \\
& \lesssim N_4 \norm{P_{N_1} v}_{L_t^{4} L_x^{\frac{2d}{d-1}}} \norm{P_{N_2} z}_{L_t^{\frac{2}{1-2\delta}} L_x^{4d}} \norm{P_{N_3} z}_{L_t^{4} L_x^{\frac{4d}{1+16\delta}}} \\
& \lesssim N_1^{-\frac{1}{2}} N_2^{-s-1+2\delta} N_3^{-s-\frac{1}{2}} N_4 \norm{P_{N_1}v}_{\mathcal{X}} R^2
\end{align*}
for $\omega \in E_R$.
Thanks to $s>-\frac{1}{2}$, $0<\delta \ll 1$, and \eqref{LP}, we have
\begin{align*}
\sum _{\substack{N_1,N_2,N_3,N_4 \in 2^{\mathbb{N}_0} \\ N_1 \sim N_2 \gtrsim N_3, N_4}} N_4^{\frac{d-3}{2}} \norm{P_{N_4} \partial (P_{N_1} v P_{N_2} z P_{N_3} z)}_{\mathcal{Y}}
& \lesssim \sum _{\substack{N_1,N_2,N_3,N_4 \in 2^{\mathbb{N}_0} \\ N_1 \sim N_2 \gtrsim N_3, N_4}} N_1^{-s-\frac{3}{2}+2\delta} N_3^{-s-\frac{1}{2}} N_4^{\frac{d-1}{2}} \norm{P_{N_1}v}_{\mathcal{X}} R^2 \\
& \lesssim \sum _{N_1 \in 2^{\mathbb{N}_0}} N_1 ^{-s+\frac{d-4}{2}+2\delta} \norm{P_{N_1}v}_{\mathcal{X}} R ^2
\lesssim \norm{v}_{X^{\frac{d-3}{2}}} R ^2 .
\end{align*}

\textbf{Subcase 4-3:} $N_2 \sim N_4 \gtrsim N_1, N_3$.

By \ref{f1}, \ref{f3}, \eqref{LP}, $d \ge 3$, the Sobolev embedding $\dot{W}^{\frac{d-3}{2}+2\delta , \frac{2d}{d-4\delta}}(\R ^d) \hookrightarrow L^{\frac{2d}{3-8\delta}}(\R ^d)$, we have
\begin{align*}
\norm{P_{N_4}\partial (P_{N_1} v P_{N_2} z P_{N_3} z)}_{\mathcal{Y}}
& \lesssim \norm{P_{N_4} (P_{N_1} v P_{N_2} z P_{N_3} z)}_{L_t^{2} L_x^{\frac{2d}{d+2}}} \\
& \lesssim \norm{P_{N_1} v}_{L_t^{\frac{1}{\delta}} L_x^{\frac{2d}{3-8\delta}}} \norm{P_{N_2} z}_{L_t^{\frac{2}{1-4\delta}} L_x^{\frac{4d}{d-1+8\delta}}} \norm{P_{N_3} z}_{L_t^{\frac{1}{\delta}} L_x^{\frac{4d}{d-1+8\delta}}} \\
& \lesssim N_1^{\frac{d-3}{2}} \norm{|\nabla |^{2\delta} P_{N_1} v}_{L_t^{\frac{1}{\delta}} L_x^{\frac{2d}{d-4 \delta}}} \norm{P_{N_2} z}_{L_t^{\frac{2}{1-4\delta}} L_x^{\frac{4d}{d-1+8\delta}}} \norm{P_{N_3} z}_{L_t^{\frac{1}{\delta}} L_x^{\frac{4d}{d-1+8\delta}}} \\
& \lesssim N_1^{\frac{d-3}{2}} N_2^{-s-1+4\delta} N_3^{-s} \norm{P_{N_1}v}_{\mathcal{X}} R^2
\end{align*}
for $\omega \in E_R$.
From \eqref{LP}, we get
\begin{align*}
& \sum _{\substack{N_1,N_2,N_3,N_4 \in 2^{\mathbb{N}_0} \\ N_2 \sim N_4 \gtrsim N_1, N_3}} N_4^{\frac{d-3}{2}} \norm{P_{N_4} \partial (P_{N_1} v P_{N_2} z P_{N_3} z)}_{\mathcal{Y}} \\
& \lesssim \sum _{\substack{N_1,N_2,N_3,N_4 \in 2^{\mathbb{N}_0} \\ N_2 \sim N_4 \gtrsim N_1, N_3}} N_1^{\frac{d-3}{2}} N_2^{-s+\frac{d-5}{2}+4\delta} N_3^{-s} \norm{P_{N_1}v}_{\mathcal{X}} R^2 \\
& \lesssim \sum _{N_2 \in 2^{\mathbb{N}_0}} N_2^{-s+\frac{d-5}{2} + 5\delta +\max (-s,0)} \norm{v}_{X^{\frac{d-3}{2}}} R ^2
\lesssim \norm{v}_{X^{\frac{d-3}{2}}} R ^2,
\end{align*}
provided that $s > \max (\frac{d-5}{2}, \frac{d-5}{4})$ and sufficiently small $\delta >0$.

\textbf{Subcase 4-4:} $N_2 \sim N_3 \gg N_1, N_4$.

By \ref{f1}, \eqref{LP}, and the Sobolev embedding $W^{\frac{d-3}{2}+2\delta , \frac{2d}{d-4\delta}}(\R ^d) \hookrightarrow L^{\frac{2d}{3-8\delta}}(\R ^d)$, we have
\begin{align*}
\norm{P_{N_4}\partial (P_{N_1} v P_{N_2} z P_{N_3} z)}_{\mathcal{Y}}
& \lesssim N_4 \norm{P_{N_4} (P_{N_1} v P_{N_2} z P_{N_3} z)}_{L_t^{\frac{1}{1-\delta}} L_x^{\frac{2d}{d+8\delta}}} \\
& \lesssim N_4 \norm{P_{N_1} v}_{L_t^{\frac{1}{\delta}} L_x^{\frac{2d}{3-8\delta}}} \norm{P_{N_2} z}_{L_t^{\frac{2}{1-2\delta}} L_x^{\frac{4d}{d-3+16\delta}}} \norm{P_{N_3} z}_{L_t^{\frac{2}{1-2\delta}} L_x^{\frac{4d}{d-3+16\delta}}} \\
& \lesssim N_4 N_1^{\frac{d-3}{2}} \norm{|\nabla |^{2\delta} P_{N_1} v}_{L_t^{\frac{1}{\delta}} L_x^{\frac{2d}{d-4\delta}}} \norm{P_{N_2} z}_{L_t^{\frac{2}{1-2\delta}} L_x^{\frac{4d}{d-3+16\delta}}} \norm{P_{N_3} z}_{L_t^{\frac{2}{1-2\delta}} L_x^{\frac{4d}{d-3+16\delta}}} \\
& \lesssim N_1^{\frac{d-3}{2}} N_2^{-s-1+2\delta} N_3^{-s-1+2\delta} N_4 \norm{P_{N_1}v}_{\mathcal{X}} R^2
\end{align*}
for $\omega \in E_R$.
We get by \eqref{LP}
\begin{align*}
& \sum _{\substack{N_1,N_2,N_3,N_4 \in 2^{\mathbb{N}_0} \\ N_2 \sim N_3 \gtrsim N_1, N_4}} N_4^{\frac{d-3}{2}} \norm{P_{N_4} \partial (P_{N_1} v P_{N_2} z P_{N_3} z)}_{\mathcal{Y}}^2 \\
& \lesssim \sum _{\substack{N_1,N_2,N_3,N_4 \in 2^{\mathbb{N}_0} \\ N_2 \sim N_3 \gtrsim N_1, N_4}} N_1^{\frac{d-3}{2}} N_2^{-2s-2+4\delta} N_4^{\frac{d-1}{2}} \norm{P_{N_1}v}_{\mathcal{X}} R^2 \\
& \lesssim \sum _{N_2 \in 2^{\mathbb{N}_0}} N_2^{-2s+\frac{d-5}{2}+5\delta} \norm{v}_{X^{\frac{d-3}{2}}} R ^5
\lesssim \norm{v}_{X^{\frac{d-3}{2}}} R ^2,
\end{align*}
provided that $s > \frac{d-5}{4}$ and sufficiently small $\delta >0$.
\end{proof}

\begin{rmk} \label{rmk:outside1}
From \ref{d1}, \ref{f1}, \ref{f2}, \ref{f3} in the proof of Lemma \ref{lem:nonest}, Lemma \ref{lem:stcStr} implies that $E_R$ in Lemma \ref{lem:nonest} satisfies the bound
\[
P(\Omega \backslash E_R) \le C \exp \left( -c \frac{R^2}{\norm{\phi}_{H^s}^2} \right) .
\]
\end{rmk}

In the local in time case, that is the estimates with $X^{\frac{d-3}{2}}$ replaced by $X^{\frac{d-3}{2}}((-T,T))$, we can gain the factor $T^{\frac{\delta}{4}}$.
Indeed, for example, $\norm{P_{N_3} z}_{L_t^4 L_x^{2d}}$, $\norm{P_{N_3} z}_{L_t^{\frac{2}{1-4\delta}} L_x^{\frac{d}{8\delta}}}$, $\norm{P_{N_3} z}_{L_t^{\frac{1}{\delta}} L_x^d}$, and $\norm{P_{N_3} z}_{L_t^{\frac{2}{1-4\delta}} L_x^{\frac{2d}{d-4+16\delta}}}$ in the Subcases 3-1, 3-2, 3-3, and 3-4 are bounded by
\[
T^{\frac{\delta}{4}} \norm{P_{N_3} z}_{L_t^{\frac{4}{1-\delta}} L_x^{2d}},
T^{\frac{\delta}{2}} \norm{P_{N_3} z}_{L_t^{\frac{2}{1-5\delta}} L_x^{\frac{d}{8\delta}}},
T^{\frac{\delta}{2}} \norm{P_{N_3} z}_{L_t^{\frac{2}{\delta}} L_x^d}, 
T^{\frac{\delta}{2}} \norm{P_{N_3} z}_{L_t^{\frac{2}{1-5\delta}} L_x^{\frac{2d}{d-4+16\delta}}}
\]
respectively.
Since $(\frac{4}{1-\delta}, \frac{2d}{d-1+\delta})$, $(\frac{2}{1-5\delta}, \frac{2d}{d-2+10\delta})$ are Schr\"{o}dinger admissible pairs and $(\frac{2}{\delta}, \frac{2d}{d-4\delta})$ is a biharmonic admissible pair, thanks to Lemma \ref{lem:stcStr}, the above quantities are bounded by
\[
T^{\frac{\delta}{4}} N_3^{-s} R
\]
outside a set of probability $\le C \exp (- c \frac{R^2}{\norm{\phi}_{H^s}^2})$.
A similar procedure is applicable for the cases 2 and 4 because at least one pair of the exponents of the Lebesgue norm for $z$ is not admissible.
Hence, we obtain $T^{\frac{\delta}{4}}$ in the all cases, except for the case 1, in the proof of Lemma \ref{lem:nonest}.
We therefore obtain the following local in time estimate.

\begin{lem} \label{lem:nonestl}
Let $d \ge 3$, $\max ( \frac{d-5}{2}, \frac{d-5}{6}) <s< \frac{d-3}{2}$, and $\delta >0$ be sufficiently small depending only on $d$ and $s$.
Given $\phi \in H^s(\R ^d)$, let $\phi ^{\omega}$ be its randomization defined by \eqref{eq:rand}.
For $R>0$, we put
\[
E_R' := \{ \omega \in \Omega : \norm{\phi ^{\omega}}_{H^s} + \norm{S(t) \phi ^{\omega}}_{S_0'(\R )} \le R \} .
\]
Then, we have
\begin{align}
\norm{\Gamma v}_{X^{\frac{d-3}{2}}((-T,T))} & \le C_1' \left( \norm{v}_{X^{\frac{d-3}{2}}((-T,T))}^3 + T ^{\frac{\delta}{4}} R ^3 \right) , \label{eq:nonestl1} \\
\norm{\Gamma v_1- \Gamma v_2}_{X^{\frac{d-3}{2}}((-T,T))} & \le C_2' \left( \norm{v_1}_{X^{\frac{d-3}{2}} ((-T,T))}^2 + \norm{v_2}_{X^{\frac{d-3}{2}}((-T,T))}^2 + T^{\frac{\delta}{4}} R^2 \right) \norm{v_1-v_2}_{X^{\frac{d-3}{2}}((-T,T))} \label{eq:nonestl2}
\end{align}
for all $v, v_1, v_2 \in X^{\frac{d-3}{2}}((-T,T))$ and $R>0$, outside a set of probability $\le C \exp (- c \frac{R^2}{\norm{\phi}_{H^s}^2})$.
\end{lem}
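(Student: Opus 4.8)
The plan is to re-run the proof of Lemma~\ref{lem:nonest} essentially verbatim, inserting one H\"{o}lder-in-time step in each frequency subcase that contains a copy of the linear part $z$. Restrict every space-time norm to the slab $(-T,T)\times\R^d$ and recall that on a time interval of length $2T<2$ one has $\norm{f}_{L_t^{q_1}}\lesssim T^{\frac1{q_1}-\frac1{q_2}}\norm{f}_{L_t^{q_2}}$ whenever $q_1\le q_2$. In each of the Cases~2, 3, 4 of that proof, at least one factor carrying $z$ is placed at a time exponent $q$ which is not itself the time exponent of a Strichartz-admissible pair with spatial exponent $\le\bar r$; enlarging $q$ slightly --- from $\tfrac1\delta$ to $\tfrac2\delta$, from $4$ to $\tfrac4{1-\delta}$, from $\tfrac2{1-2\delta}$ to $\tfrac2{1-3\delta}$, and from $\tfrac2{1-4\delta}$ to $\tfrac2{1-5\delta}$ --- produces precisely the time exponents appearing in the definition~\eqref{norm:S_0'} of $S_0'(I)$ and costs a factor $T^{\delta/4}$ (indeed $\tfrac14-\tfrac{1-\delta}4=\tfrac\delta4$, and the other three substitutions cost $T^{\delta/2}\le T^{\delta/4}$ since $0<T<1$). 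The enlarged pairs remain admissible: $(\tfrac4{1-\delta},\tfrac{2d}{d-1+\delta})$, $(\tfrac2{1-3\delta},\tfrac{2d}{d-2+6\delta})$ and $(\tfrac2{1-5\delta},\tfrac{2d}{d-2+10\delta})$ are Schr\"{o}dinger admissible, while $(\tfrac2\delta,\tfrac{2d}{d-4\delta})$ is biharmonic admissible, so Lemma~\ref{lem:stcStr}, applied with the appropriate $\bar r$, bounds the singled-out $z$-factor by $T^{\delta/4}N^{-s}R$ on the event $E_R'$, up to the same harmless $N^{O(\delta)}$ losses already present in Lemma~\ref{lem:nonest}.

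With this preparation I would then go through the subcases exactly as in Lemma~\ref{lem:nonest}. Case~1 ($vvv$) is untouched: it is deterministic and produces the term $\norm{v}_{X^{\frac{d-3}{2}}((-T,T))}^3$ with no power of $T$. In Subcases~2-1 and 2-2 apply the H\"{o}lder step to one of the $L_t^{2/(1-2\delta)}$ or $L_t^{2/(1-4\delta)}$ factors of $z$; in Subcases~3-1 through~3-4 apply it to the unique $z$-factor (the $L_t^4L_x^{2d}$, $L_t^{2/(1-4\delta)}L_x^{d/(8\delta)}$, $L_t^{1/\delta}L_x^d$ and $L_t^{2/(1-4\delta)}L_x^{2d/(d-4+16\delta)}$ norms, respectively, as indicated in the remark preceding the lemma); and in Subcases~4-1 through~4-4 apply it to one of the two $z$-factors. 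In every instance the frequency-dependent part of the bound is literally the one already obtained in Lemma~\ref{lem:nonest}, since the extracted factor $T^{\delta/4}$ carries no dependence on $N_1,N_2,N_3,N_4$; hence the dyadic summations go through verbatim under the same hypotheses $\max(\tfrac{d-5}2,\tfrac{d-5}6)<s<\tfrac{d-3}2$ and $0<\delta\ll1$, yielding $\norm{\Gamma v}_{X^{\frac{d-3}{2}}((-T,T))}\le C_1'(\norm{v}_{X^{\frac{d-3}{2}}((-T,T))}^3+T^{\delta/4}R^3)$. The difference estimate~\eqref{eq:nonestl2} is handled the same way: expanding $\mathcal{N}(v_1+z)-\mathcal{N}(v_2+z)$ as a sum of trilinear terms each carrying a factor $v_1-v_2$, every such term is either purely in the $v$'s (deterministic, no $T$) or contains at least one $z$, to which the identical H\"{o}lder-in-time step applies.

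For the probability bound, note that the only randomness entering the argument is through the finitely many space-time norms of $S(t)\phi^\omega$ assembled in $\norm{S(t)\phi^\omega}_{S_0'(\R)}$, each of which --- after the reduction above --- is controlled by a Strichartz norm of a genuinely admissible pair to which Lemma~\ref{lem:stcStr} applies. Exactly as in Remark~\ref{rmk:outside1}, a union bound over these finitely many events gives $P(\Omega\setminus E_R')\le C\exp(-cR^2/\norm{\phi}_{H^s}^2)$, so \eqref{eq:nonestl1}--\eqref{eq:nonestl2} hold outside a set of the asserted probability.

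The step I expect to cost the most effort is the bookkeeping implicit in the first two paragraphs: checking, subcase by subcase, that the $z$-factor one singles out genuinely sits at a non-endpoint time exponent whose enlarged sibling pair --- with the same or a smaller spatial exponent, so that the $\bar r$-flexibility of Lemma~\ref{lem:stcStr} is available --- is among the pairs defining $S_0'$, and that the extra $N^{O(\delta)}$ factors generated by Bernstein when passing between spatial exponents stay absorbed by the strict inequalities on $s$, which were used with slack already in Lemma~\ref{lem:nonest}. This is nothing more than the routine ``subcriticality buys a positive power of $T$'' mechanism, but it must be verified uniformly across all the subcases of Cases~2--4.
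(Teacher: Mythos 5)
Your proposal reconstructs exactly the argument the paper sketches in the paragraph preceding Lemma~\ref{lem:nonestl}: the H\"older-in-time replacements you list are precisely the paper's, the enlarged target pairs are those verified admissible in Remark~\ref{rmk:outside2}, and the extra $N^{O(\delta)}$ factors absorb into the strict inequalities on $s$ exactly as in Lemma~\ref{lem:nonest}. One inessential slip: your assertion that in every subcase a $z$-factor sits at a time exponent for which no admissible pair with spatial exponent $\le\bar r$ exists is false (in Subcase~3-1, for instance, $\norm{P_{N_3}z}_{L_t^4L_x^{2d}}$ is already reachable via the Schr\"odinger-admissible pair $(4,\tfrac{2d}{d-1})$ since $\tfrac{2d}{d-1}\le 2d$), but this premise is not actually used --- all the argument needs is the always-available freedom to enlarge $q$ slightly and re-invoke Lemma~\ref{lem:stcStr} at the correspondingly smaller admissible spatial exponent, which is precisely what you do.
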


\begin{rmk} \label{rmk:outside2}
We give a similar remark as in Remark \ref{rmk:outside1}.
Since $(\frac{1}{\delta}, \frac{2d}{d-8\delta})$ and $(\frac{2}{\delta}, \frac{2d}{d-4\delta})$ are biharmonic admissible and $(\frac{4}{1-\delta}, \frac{2d}{d-1+\delta})$, $(\frac{2}{1-3\delta}, \frac{2d}{d-2+6\delta})$, and $(\frac{2}{1-5\delta}, \frac{2d}{d-2+10\delta})$ are  Schr\"{o}dinger admissible, Lemma \ref{lem:stcStr} implies that $E_R'$ in Lemma \ref{lem:nonestl} satisfies the bound
\[
P(\Omega \backslash E_R') \le C \exp \left( -c \frac{R^2}{\norm{\phi}_{H^s}^2} \right) .
\]
\end{rmk}

\section{Proof of Theorems \ref{thm:LWP} ,\ref{thm:WP}} \label{S:4}

To use the contraction argument, we establish the almost sure local in time well-posedness and probabilistic small data global existence and scattering.

\begin{proof}[Proof of Theorem \ref{thm:LWP}]
Let $\eta$ be small enough such that
\begin{equation} \label{cond:eta1}
2 C_1' \eta ^2 \le 1 , \quad 2 C_2' \eta ^2 \le \frac{1}{4},
\end{equation}
where $C_1'$ and $C_2'$ are the constants as in \eqref{eq:nonestl1} and \eqref{eq:nonestl2}.
For any $R>0$, we choose $T=T(R)$ such that
\[
T := \min \left( \frac{\eta}{2 C_1' R^3} , \frac{1}{4C_2' R^2} \right) ^{\frac{4}{\delta}} .
\]
We show that  $\Gamma = \Gamma ^{\omega}$ is a contraction on the ball $B_{\eta}$ defined by
\[
B_{\eta} := \{ u \in X^{\frac{d-3}{2}}((-T,T)) : \norm{u}_{X^{\frac{d-3}{2}}((-T,T))} \le \eta \}
\]
outside a set of probability $ \le C \exp ( - c \frac{1}{T^{\gamma} \norm{\phi}_{H^s}})$ for some $\gamma > 0$, which leads the almost sure local in time well-posedness.
Put
\[
\Omega _T := \{ \omega \in \Omega : \norm{\phi ^{\omega}}_{H^s} + \norm{\lr{\nabla}^s S(t) \phi ^{\omega}}_{S_0'((-T,T))} < R \} ,
\]
where the norm $\norm{\cdot}_{S_0'((-T,T))}$ is defined by \eqref{norm:S_0'} with $I=(-T,T)$.
Lemma \ref{lem:nonestl}, Remark \ref{rmk:outside2} and \eqref{cond:eta1} yield that
\begin{align*}
\norm{\Gamma v}_{X^{\frac{d-3}{2}}((-T,T))} & \le C_1' ( \eta ^3 + T^{\frac{\delta}{4}} R^3)  \le \eta , \\\
\norm{\Gamma v_1 - \Gamma v_2}_{X^{\frac{d-3}{2}}((-T,T))} & \le C_2' ( 2 \eta ^2 + T^{\frac{\delta}{4}} R^2) \norm{v_1-v_2}_{X^{\frac{d-3}{2}}((-T,T))} \le \frac{1}{2} \norm{v_1-v_2}_{X^{\frac{d-3}{2}}((-T,T))}
\end{align*}
for $v , v_1, v_2 \in B_{\eta}$, $\omega \in \Omega \backslash \Omega _T$, and
\[
1-P(\Omega _T) \le C \exp \left( - c\frac{R^2}{\norm{\phi}_{H^s}^2} \right)
\sim C \exp \left( - c\frac{1}{T^{\gamma} \norm{\phi}_{H^s}^2} \right)
\]
for some $\gamma >0$.
Setting
\[
\Sigma := \bigcup _{n \in \mathbb{N}} \Omega _{\frac{1}{n}},
\]
we have $P(\Sigma )=1$.
This completes the proof of Theorem \ref{thm:LWP}.
\end{proof}

\begin{proof}[Proof of Theorem \ref{thm:WP}]
Let $\eta >0$ be sufficiently small such that
\begin{equation} \label{cond:eta2}
2C_1 \eta  ^2 \le 1 ,\quad
3 C_2 \eta ^2 \le \frac{1}{2} ,
\end{equation}
where $C_1$ and $C_2$ are the constants as in \eqref{eq:nonest1} and \eqref{eq:nonest2}.
The probabilistic small data global existence is reduced to show that $\Gamma = \Gamma ^{\omega}$ is a contraction on the ball $B_{R}$ defined by
\[
B_{\eta} := \{ u \in X^{\frac{d-3}{2}}(\R ) : \norm{u}_{X^{\frac{d-3}{2}}} \le \eta \}
\]
outside a set of probability $\le C \exp ( - c \frac{\eta ^2}{\norm{\phi}_{H^s}})$.
Put
\[
\Omega _{\phi} := \{ \omega \in \Omega : \norm{\phi ^{\omega}}_{H^s} + \norm{\lr{\nabla}^s \phi ^{\omega}}_{S_0(\R )} <\eta \} ,
\]
where the norm $\norm{\cdot}_{S_0 (\R )}$ is defined by \eqref{norm:S_0} with $I=\R$.
By Lemma \ref{lem:nonest}, Remark \ref{rmk:outside1}, and \eqref{cond:eta2}, we have
\begin{align}
\norm{\Gamma v}_{X^{\frac{d-3}{2}}} & \le 2 C_1 \eta ^3 \le \eta , \label{nonest3} \\
\norm{\Gamma v_1 - \Gamma v_2}_{X^{\frac{d-3}{2}}} & \le 3 C_2 \eta ^2 \norm{v_1-v_2}_{X^{\frac{d-3}{2}}} \le \frac{1}{2} \norm{v_1-v_2}_{X^{\frac{d-3}{2}}} \label{nonest4}
\end{align}
for $v , v_1, v_2 \in B_{\eta}$, $\omega \in \Omega \backslash \Omega _{\phi}$, and
\[
1-P(\Omega _{\phi}) \le C \exp \left( -c \frac{\eta ^2}{\norm{\phi}_{H^s}^2} \right) .
\]

Next, we prove probabilistic small data scattering.
Fix $\omega \in \Omega _{\phi}$ and let $u=u(\eps , \omega )$ be the global in time solution with $u(0,x) = \phi ^{\omega}(x)$ constructed above.
Set $z^{\omega} (t) = S(t) \phi ^{\omega}$ and $v(t) = u(t)-z(t)$.
To show scattering, we need to prove that there exists $v_+^{\omega} \in H^{\frac{d-3}{2}}(\R ^d)$ such that
\[
S(-t) v(t) = \mp i \int _0^t S(-t') \mathcal{N}(v+ z)(t') dt' \rightarrow v_+^{\omega}
\]
in $H^{\frac{d-3}{2}}(\R ^d)$ as $t \rightarrow \infty$.
Proposition \ref{prop:Str} and Lemma \ref{lem:nonest} implies
\[
\Norm{\int _0^{\infty} S(-t') \mathcal{N}(v+ z)(t') dt'}_{H^{\frac{d-3}{2}}}
\lesssim \norm{\lr{\nabla}^{\frac{d-3}{2}} \mathcal{N}(v+z)}_{\mathcal{Y}}
\le C_1 \left( \norm{v}_{X^{\frac{d-3}{2}}}^3 + \eta ^3 \right)
\]
for $\omega \in \Omega _{\phi}$, which shows that the existence of the limit of $S(-t) v(t)$ in $H^{\frac{d-3}{2}}(\R ^d)$ as $t \rightarrow \infty$.
The negative time direction deduces from the same manner.
\end{proof}

\begin{rmk} \label{almostsureGWP}
To get almost sure small data global well-posedness and scattering, we employ the same argument as in the end of the proof of Theorem \ref{thm:LWP}.
Let $\eta$ be small as above.
Replacing $\phi$ in the proof of Theorem \ref{thm:WP} with $\eps \phi$ for $0< \eps \ll 1$, we define
\[
\Omega _{\eps} := \{ \omega \in \Omega : \eps \norm{\phi ^{\omega}}_{H^s} + \eps \norm{\lr{\nabla}^s \phi ^{\omega}}_{S_0 (\R )}< \eta \} .
\]
Then, Lemma \ref{lem:nonest}, Remark \ref{rmk:outside1}, and \eqref{cond:eta2} yield \eqref{nonest3} and \eqref{nonest4} for $v , v_1, v_2 \in B_{\eta}$, $\omega \in \Omega \backslash \Omega _{\eps}$, and
\[
1-P(\Omega _{\eps}) \le C \exp \left( -c \frac{\eta ^2}{\eps ^2 \norm{\phi}_{H^s}^2} \right) .
\]
Putting
\[
\Sigma := \bigcup _{n \in \mathbb{N}} \Omega _{\frac{1}{n}},
\]
we have $P(\Sigma )=1$.
\end{rmk}

\section{Proof of Theorem \ref{thm:GWP}} \label{S:5}

We consider the scaled equation instead of \eqref{4NLS}:
\begin{equation} \label{dil4NLS}
\left\{
\begin{aligned}
& (i\partial _t + \Delta ^2) u_{\mu} = \pm \partial (|u_{\mu}|^2 u_{\mu}), \\
& u_{\mu}(0,x) = (\phi ^{\omega ,\mu})_{\mu} (x) ,
\end{aligned}
\right.
\end{equation}
where $(\phi ^{\omega ,\mu})_{\mu} (x) = \mu ^{-\frac{3}{2}} \phi ^{\omega ,\mu}( \frac{x}{\mu})$.
We write $(\phi ^{\omega ,\mu})_{\mu}$ as $\phi ^{\omega ,\mu}_{\mu}$ for short.
Putting the linear and nonlinear parts of $u_{\mu}$ by $z_{\mu}(t) = z_{\mu}^{\omega}(t) := S(t) \phi ^{\omega ,\mu}_{\mu}$ and $v_{\mu}(t) = u_{\mu}(t) - S(t) \phi ^{\omega , \mu}_{\mu}$ respectively, we reduce \eqref{dil4NLS} to
\begin{equation}
\left\{
\begin{aligned}
& (i \partial _t + \Delta ^2) v_{\mu} = \pm (|v_{\mu} + z_{\mu}|^2 (v_{\mu} + z_{\mu}), \\
& v_{\mu}(0,x) = 0.
\end{aligned}
\right.
\end{equation}

Define $\Gamma _{\mu}$ by
\[
\Gamma _{\mu} v_{\mu} (t) := \mp i \int _0^t S(t-t') \mathcal{N}(v_{\mu} + z_{\mu}) (t') dt'.
\]
We show that for every $\eps >0$ there exists $\mu _0 = \mu _0 ( \eps , \norm{\phi}_{H^s}) >0$ such that for $\mu \in (0, \mu _0)$ the estimates
\begin{align}
\norm{\Gamma _{\mu} v_{\mu}}_{X^{\frac{d-3}{2}}} & \le C_1 \left( \norm{v_{\mu}}_{X^{\frac{d-3}{2}}}^3 + \eta ^3 \right) , \label{eq:nonestdil1} \\
\norm{\Gamma _{\mu} v_{\mu ,1} - \Gamma v_{\mu ,2}}_{X^{\frac{d-3}{2}}} & \le C_2 \left( \norm{v_{\mu ,1}}_{X^{\frac{d-3}{2}}}^2 + \norm{v_{\mu ,2}}_{X^{\frac{d-3}{2}}}^2 + \eta ^2 \right) \norm{v_{\mu ,1}-v_{\mu ,2}}_{X^{\frac{d-3}{2}}} \label{eq:nonestdil2}
\end{align}
for all $v_{\mu}, v_{\mu ,1}, v_{\mu ,2} \in X^{\frac{d-3}{2}}_0$, outside a set of probability $\le \eps$, where $\eta$ satisfies \eqref{cond:eta2}.
From
\[
\mathcal{F}[\psi (D-n) \phi _{\mu}] (\xi )
= \psi (\xi -n) \mathcal{F}[\phi _{\mu}](\xi )
= \psi (\xi -n) \mu ^{d-\frac{3}{2}} \mathcal{F}[\phi](\mu \xi )
= \mathcal{F}[ (\psi ^{\mu} ( D- \mu n) \phi )_{\mu}] (\xi ),
\]
we get
\begin{equation} \label{scale3}
\phi ^{\omega ,\mu}_{\mu}  = ( \phi ^{\omega ,\mu})_{\mu} = \sum _{n \in \mathbb{Z}^d} g_n(\omega ) \psi (D-n) \phi _{\mu} .
\end{equation}
In addiction, a simple calculation shows
\[
\mathcal{F}_x[( S(t) \phi ^{\omega ,\mu})_{\mu}] (\xi )
= \mu ^{d-\frac{3}{2}} e^{-i \frac{t}{\mu ^4} |\mu \xi |^4} \mathcal{F}[\phi ^{\omega ,\mu}] (\mu \xi)
= e^{-it |\xi |^4} \mathcal{F}[\phi ^{\omega ,\mu}_{\mu}](\xi )
= \mathcal{F}[z_{\mu}](t,\xi ) .
\]

Given $\eta$ as in \eqref{cond:eta2} and $\mu >0$, we define $\Omega _{\mu}$ by
\begin{align*}
\Omega _{\mu} := \{ \omega \in \Omega : \norm{\phi ^{\omega ,\mu}_{\mu}}_{H^s} + \norm{\lr{\nabla}^s S(t) \phi _{\mu}^{\omega ,\mu}}_{S_0(\R )} \le \eta \} ,
\end{align*}
where the norm $\norm{\cdot}_{S_0(\R )}$ is defined by \eqref{norm:S_0} with $I=\R$.
Lemma \ref{lem:stcStr}, \ref{d1}, \ref{f1}, \ref{f2}, \ref{f3}, \eqref{scale}, and \eqref{scale3} yield
\[
1-P(\Omega _{\mu}) \le C \exp \left( - c \frac{\eta ^2}{\norm{\phi _{\mu}}_{H^s}^2} \right)
\le C \exp \left( - c \frac{\eta ^2}{\mu ^{d-3-2 \max (s,0)} \norm{\phi}_{H^s}^2} \right)
\]
for $0< \mu <1$.
By setting
\[
\mu _0 \sim \left( \frac{\eta}{\norm{\phi}_{H^s} (-\log \eps)^{\frac{1}{2}}} \right) ^{\frac{1}{\frac{d-3}{2}-s}} ,
\]
we have
\[
1 - P(\Omega _{\mu} ) < \eps
\]
for $\mu \in (0, \mu _0)$.
The same argument as in the proof of Lemma \ref{lem:nonest}, whose $v$ and $z$ are replaced with $v_{\mu}$ and $z_{\mu}$ respectively, shows that the desired estimates \eqref{eq:nonestdil1} and \eqref{eq:nonestdil2}.
By repeating the proof of Theorem \ref{thm:WP}, we obtain Theorem \ref{thm:GWP}.

\section*{Acknowledgment}

The work of the second author was partially supported by JSPS KAKENHI Grant number 26887017.


\begin{thebibliography}{99}

\bibitem{BOP1}
\'{A}. B\'{e}nyi, T. Oh, and O. Pocovnicu, Wiener randomization on unbounded domains and an application to almost sure well-posedness of NLS, arXiv:1405.7326v2.

\bibitem{BOP2}
\'{A}. B\'{e}nyi, T. Oh, and O. Pocovnicu, On the probabilistic Cauchy theory of the cubic nonlinear Schr\"{o}dinger equation on $\R ^d$, $d \ge 3$, arXiv:1405.7327v2.

\bibitem{BT1}
N. Burq and N. Tzvetkov, Random data Cauchy theory for supercritical wave equations I: local theory, Invent. Math. 173 (2008), no. 3, 449-475 (2008).

\bibitem{BT2}
N. Burq and N. Tzvetkov, Random data Cauchy theory for supercritical wave equations II: a global existence result, Invent. Math. 173 (2008), no. 3, 477-496 (2008).

\bibitem{CCT}
M. Christ, J. Colliander, and T. Tao, Asymptotics, frequency modulation, and low regularity ill-posedness for canonical defocusing equations, Amer. J. Math. 125 (2003), no. 6, 1235-1293.

\bibitem{CW}
F. Christ and M. Weinstein, Dispersion of small amplitude solutions of the generalized Korteweg-de Vries equation, J. Funct. Anal. 100 (1991), no. 1, 87-109. 

\bibitem{Dysthe}
K. Dysthe, Note on a modification to the nonlinear Schr\"{o}dinger equation for application to deep water waves, Proc. R. Soc. Lond. Ser. A 369 (1979), 105-114.

\bibitem{Fukumoto}
Y. Fukumoto, Motion of a curved vortex filament: higher-order asymptotics, in: Proc. IUTAM Symp. Geom. Stat. Turbul., 2001, 211-216.

\bibitem{HHW}
C. Hao, L. Hsiao, and B. Wang, Wellposedness for the fourth order nonlinear Schr\"{o}dinger equations, J. Math. Anal. Appl. 320 (2006), no. 1, 246-265.

\bibitem{HN1}
N. Hayashi and P. Naumkin, Large time asymptotics for the fourth-order nonlinear Schr\"{o}dinger equation, J. Differential Equations 258 (2015), 880-905.

\bibitem{HN2}
N. Hayashi and P. Naumkin, Global existence and asymptotic behavior of solutions to the fourth-order nonlinear Schr\"{o}dinger equation in the critical case, Nonlinear Anal. 116 (2015), 112-131.

\bibitem{HO}
H. Hirayama and M. Okamoto, Well-posedness and scattering for the fourth order nonlinear Schr\"odingier equations at the scaling critical regularity, preprint.

\bibitem{HJ1}
Z. Huo and Y. Jia, The Cauchy problem for the fourth-order nonlinear Schr\"{o}dinger equation related to the vortex filament, J. Differential Equations 214 (2005), no. 1, 1-35.

\bibitem{HJ2}
Z. Huo and Y. Jia, Well-posedness for the fourth-order nonlinear derivative Schr\"{o}dinger equation in higher dimension. J. Math. Pures Appl. (9) 96 (2011), no. 2, 190-206. 

\bibitem{Karpman}
V. Karpman, Stabilization of soliton instabilities by higher order dispersion: fourth-order nonlinear Schr\"{o}dinger-type equations, Phys. Rev. E 53 (2) (1996), 1336-1339.

\bibitem{KS} 
V. Karpman, A. Shagalov, Stability of soliton described by nonlinear Schr\"{o}dinger-type equations with higher-order dispersion, Physica D 144 (2000), 194-210.

\bibitem{LM}
J. L\"{u}hrmann and D. Mendelson, Random data Cauchy theory for nonlinear wave equations of power-type on $\R ^3$, arXiv:1309.1225.

\bibitem{Pau}
B. Pausader, Global well-posedness for energy critical fourth-order Schr\"{o}dinger equations in the radial case, Dyn. Partial Differ. Equ. 4 (2007), no. 3, 197-225. 


\bibitem{Wang}
Y. Wang, Global well-posedness for the generalised fourth-order Schr\"{o}dinger equation, Bull. Aust. Math. Soc. 85 (2012), no. 3, 371-379.
\end{thebibliography}
\end{document}